\newcommand{\blue}[1]{{#1}}
\newcommand{\orange}[1]{{\color{orange}#1}}
\newcommand{\edit}[1]{{\color{blue}#1}}
\newcolumntype{Y}{>{\RaggedRight\arraybackslash}X}
\pgfplotsset{compat = 1.3}
\newcommand*\widefbox[1]{\fbox{\hspace{2em}#1\hspace{2em}}}
\newcommand{\mcal}[1]{\mathcal{#1}}
\newcommand{\ldef}{\stackrel{\Delta}{=}}
\newcommand{\mb}[1]{\mathbf{#1}}
\newcommand{\tth}[1]{{#1}^{\text{th}}}
\newcommand{\bm}[1]{
	\begin{bmatrix}
		#1
	\end{bmatrix}
}
\newcommand{\real}[1]{\mathbb{R}^{#1}}
\newcommand{\nn}{\texttt{MoGE}}
\newcommand{\mleq}{\preceq}
\newtheorem{thm}{Theorem}
\newtheorem{lemma}{Lemma}
\newtheorem{prop}{Proposition}
\newtheorem{corollary}{Corollary}
\newtheorem{defn}{Definition}
\newtheorem{remark}{Remark}
\newtheorem*{claim*}{Claim}
\newtheorem{assumption}{Assumption}
\newtheorem*{thm*}{Theorem}
\newcommand{\tL}{\tilde{L}}
\newcommand{\yff}{y_{ij}^\text{ff}}
\newcommand{\yft}{y_{ij}^\text{ft}}
\newcommand{\ytt}{y_{ij}^\text{tt}}
\newcommand{\ytf}{y_{ij}^\text{tf}}
\newcommand{\pf}{P}
\newcommand{\qf}{Q}
\newcommand{\pt}{P}
\newcommand{\qt}{Q}
\newcommand{\pff}{\mathrm{pf}}
\newcommand{\re}[1]{\text{Re}\left(#1\right)}
\newcommand{\im}[1]{\text{Im}\left(#1\right)}
\newcommand{\soc}{\text{soc}}
\newcommand{\ang}{\text{ang}}
\newcommand{\moge}{$\texttt{MoGE}$}
\newcommand{\icnn}{\texttt{ICNN}}
\newcommand{\mgn}{\texttt{MGN}}
\newcommand{\gate}{\texttt{Gate}}
\newcommand{\ro}[1]{%
  \xrightarrow{\mathmakebox[\rowidth]{#1}}%
}
\newlength{\rowidth}
\newcolumntype{Y}{>{\raggedright\arraybackslash}X}
\renewcommand{\edit}[1]{{#1}}
\renewcommand{\orange}[1]{{#1}}
\begin{document}
	\begin{frontmatter}
		\title{\blue{Presolving Convexified Optimal Power Flow with Mixtures of Gradient Experts}}
		\tnotetext[1]{This work was partially supported by the 2023 CITRIS Interdisciplinary Innovation Program (I2P).}
		\author[1]{Shourya Bose\,\orcidlink{0000-0002-2081-9545}}
		\ead{shbose@ucsc.edu}
		\author[1]{Kejun Chen\,\orcidlink{0000-0002-2162-7887}}
		\ead{kchen158@ucsc.edu}
		\author[1]{Yu Zhang\,\orcidlink{0000-0001-7889-2676}}
		\ead{zhangy@ucsc.edu}
		\affiliation[1]{organization={University of California Santa Cruz}, addressline={1156 High St.}, city={Santa Cruz}, postcode={CA 95064}, country={USA}}	
		
		\begin{abstract}
			\edit{
				Convex relaxations and approximations of the optimal power flow (OPF) problem have gained significant research and industrial interest for planning and operations in electric power networks. One approach for reducing their solve times is \emph{presolving} which eliminates constraints from the problem definition, thereby reducing the burden of the underlying optimization algorithm.
				To this end, we propose a presolving framework for convexified optimal power flow (C-OPF) problems, which uses a novel deep learning-based architecture called \moge\ (Mixture of Gradient Experts). In this framework, problem size is reduced by learning the mapping between C-OPF parameters and optimal dual variables (the latter being representable as gradients), which is then used to screen constraints that are non-binding at optimum. The validity of using this presolve framework across arbitrary families of C-OPF problems is theoretically demonstrated. We characterize generalization in \moge\ and develop a post-solve recovery procedure to mitigate possible constraint classification errors. Using two different C-OPF models, we show via simulations that our framework reduces solve times by upto 34\% across multiple PGLIB and MATPOWER test cases, while providing an identical solution as the full problem.
			}
		\end{abstract}
        \begin{keyword}
            optimal power flow, convex relaxation, convex optimization, constraint screening, input convex neural network, monotone gradient network, physics-informed neural network, constraint qualification, data augmentation, deep learning, artifical intelligence, energy optimization
        \end{keyword}
	\end{frontmatter}

	\section{Introduction}
	The Optimal Power Flow (OPF) determines the optimal settings for control variables in an electric power network. Its primary goal is to minimize the total generation cost while satisfying various operational constraints (e.g., generation limits, voltage limits, and thermal limits). \blue{OPF is solved by different entities such as independent system operators~\cite{opf-history-ferc}, energy markets~\cite{ZQ-etal:2009-energymarkets}, and microgrid operators~\cite{SB-microgrids} as a part of their planning and operational procedures.}
	{Increasing stochastic factors such as load demand fluctuations and varying generation from renewable distributed energy resources (DERs) necessitate frequent re-solving of OPF, which in turn calls for time-efficient solution methods.\par
		Convexified formulations of AC optimal power flow (ACOPF), which we denote by C-OPFs, have recently gained significant research and industrial attention. C-OPF formulations benefit from theoretical guarantees of global optimality and the availability of mature commercial solvers. Popular C-OPF formulations include DC optimal power flow (DCOPF)~\cite{AJW-BFW-GBS:2013}, convexified DistFlow OPF~\cite{RAJ-2006,MF-CRC-SHL-KMC:2011,MF-SHL:2013-1}, semidefinite relaxation (SDR)~\cite{XB-HW-KF-YW:2008,RM-SS-JL:2015,JL-SHL:2012}, second-order cone programming (SOCP) relaxation~\cite{SB-SHL-TT-BH:2015}, quadratic-convex relaxation~\cite{CC-HLH-PVH:2016}, McCormick relaxation~\cite{BK-SSD-XAS:2016}, etc. A comprehensive review of convex relaxations and approximations of OPF can be found in~\cite{DKM-IAH:2019}. }
	
	\subsection{Motivation and Scope}
	In this paper, we aim to reduce the solution time of C-OPF solvers using methods that are agnostic to the underlying optimization algorithm. This is achieved by using the presolving approach of \emph{Constraint Screening (CS)}, which identifies and eliminates constraints that are non-binding at optimum, thereby reducing problem size and accelerating solution times. \blue{This is done by learning the mapping between problem parameters and optimal dual variables using our proposed \moge\ architecture, which allows identifying binding constraints using the Kahrush-Kuhn-Tucker (KKT) conditions}. These problem parameters include load demands and possibly other quantities which vary between different C-OPF instances. 
	
    We restrict our attention to C-OPFs rather than the nonconvex ACOPF due to several reasons. C-OPFs are widely used as a faithful approximation of ACOPF~\cite{ZY-etal:2017}, with the workhorse DCOPF being a linear, and therefore convex approximation of ACOPF. Moreover, SOCP- and SDP-based C-OPFs allow recovery of optimal ACOPF solutions under certain circumstances~\cite{low-exactness,lavaei-zero-duality-gap}. Lastly, convex relaxations of ACOPF provide a lower bound to the optimal ACOPF objective value, which is useful in branch-and-bound algorithms for solving mixed-integer formulations of ACOPF~\cite{SSS-JHT-1997}. \par
    \edit{
    We also highlight that CS differs fundamentally from many other methods to accelerate OPF. For example, methods based on machine learning~\cite{low-unsupervised,PLD-DR-JZC:2021} learn the mapping from input parameters to a partial set of OPF variables, with the remaining variables computed algorithmically. Similarly, our previous works~\cite{zhanglab-1,zhanglab-2} use physics-informed neural networks and gradient learning to directly learn OPF solutions, or parts thereof. Some other methods for accelerating OPF include graph neural networks~\cite{gnn-opf}, evolutionary algorithms~\cite{evo1,evo2}, simulated annealing~\cite{sa-opf}, and particle swarm optimization~\cite{pso-opf}. The aforementioned methods, while benefiting from fast solutions, lack provable guarantees of local optimality or even feasibility of the resulting solutions due to generalization errors inherent to machine learning techniques. On the other hand, the present work identifies CS as an approach which can result in provably feasible and optimal solutions, even with the use of machine learning.
    }

	\subsection{Prior Work}
	Earliest works on the removal of redundant and non-binding constraints are for linear programs~\cite{JT-1983,GLT-FMT-SZ:1966}. Recent domain-specific CS methods for power systems are based on geometry of the feasible set or machine learning techniques.
	CS methods based on feasible set geometry analyze the constraints \emph{a priori} to generate a list of constraints which are provably non-binding at the optimum. In this category, various algorithms have been proposed to eliminate redundant line flow constraints for DCOPF formulations, based on solving auxiliary optimization problems (see~\cite{ZJZ-etal:2020,BH-etal:2013,RW-RM:2020}). Similarly, different algorithms have been developed to accelerate the branch-and-bound algorithm for unit commitment by eliminating network constraints (see~\cite{SZ-etal:2021,AP-SP-JMM-JCA:2023}). \blue{However, geometry-based methods require expensive re-computations for every \emph{test case} (for example a 30-bus system vs. a 141-bus system), which involves solving auxiliary optimization problems whose count is proportional to the number of constraints being targeted for elimination. Furthermore, since these methods aim to remove non-binding constraints over \emph{all} possible instances, the actual number of constraints removed are fewer.}

	Machine learning-based CS methods leverage historical or synthetic OPF datasets to train learning models, with load demands typically included as key problem parameters. For example, Deka and Misra~\cite{DD-SM:2019} and Hasan and Kargarian~\cite{FH-AK:2022} train deep neural network (DNN) based classifiers to learn the mapping between problem parameters and the binding constraints directly. \blue{These works fall under the umbrella category of \emph{active-set methods}, which aim to learn the optimal set of binding constraints from $2^M$ possible configurations of a problem that contains $M$ candidate constraints.}
	Dual variables play a pivotal role in DNN applications for OPF, as their optimal values reveal the binding status of corresponding constraints~\cite{FF-TWKM-PVH:2020,MKS-VK-GBG-2022}. Chen, Zhang, Chen, and Zhang~\cite{LC-YZ-BZ-2022} learn optimal dual variables with DNNs to solve DCOPF via solving the dual problem.  Nellikkath and Chatzivasileiadis~\cite{RN-SC-2022} leverage the KKT system to improve the robustness of deep learning for learning optimal solutions.

	\subsection{Contribution}
	Based on learning the dual variables, we propose a CS method to reduce the size of C-OPF problems in this paper. We first \blue{present a generic representation which can denote any arbitrary C-OPF} as a parametric convex optimization problem, with parameters representing values such as load demands, thermal limits, etc. We develop a novel neural network architecture called \emph{\moge\ (Mixture of Gradient Experts)} to learn the mapping from parameters to optimal dual variables. Once trained, \moge\ predicts optimal dual variables for new C-OPF instances, enabling the identification of non-binding constraints via complementary slackness. Removing these non-binding constraints reduces the problem size, thereby accelerating the solution process. In addition, we address the issue of potentially misclassified constraints by incorporating a recovery process. Any identified violated constraints after solving the reduced C-OPF are retroactively added to the active constraints, and the problem is re-solved. We harness the convexity of C-OPF to guarantee that even with mispredicted constraints, \blue{a finite number of re-solves are required to achieve an exact solution}.
	
	Our proposed approach presents significant advantages over existing active-set methods such as~\cite{DD-SM:2019,FH-AK:2022}. We use dual variables as a proxy to identify binding constraints, which makes our method \emph{physics-informed}.
	 We also address weaknesses inherent in other dual variable-based methods such as~\cite{LC-YZ-BZ-2022,RN-SC-2022}. First, our framework can handle nonlinear convex constraints, necessitating an analysis of the existence and uniqueness of the dual solution, as well as the validity of using dual variables for constraint classification. We provide the necessary analyses to support this. Second, previous methods lack provable robustness against mispredictions of dual variables, instead only offering theoretical generalization guarantees which are difficult to quantify for practical C-OPF cases. We address this issue through our recovery procedure. Finally, we address practical issues with implementation such as dataset augmentation for efficient utilization of data, and perform extensive simulations on a variety of test cases from the PGLIB OPF~\cite{PGLIB} and MATPOWER~\cite{matpower} libraries to validate the merits of our proposed method.
	
	\subsection{Organization and Notation}
	The paper is structured as follows. Section~\ref{sec:2} introduces a standard form of C-OPF that encompasses various OPF models, with the quadratic-convex (QC) \edit{and convexified DistFlow} (CDF) relaxations of ACOPF serving as specific examples. Section~\ref{sec:3} begins with an exploration of the analytical properties of C-OPF. Subsequently, we establish strong duality results for QC-OPF and CDF-OPF, which guarantees uniqueness of dual solutions. \blue{This analysis is then generalized into a test which can be performed for arbitrary families of C-OPF}. This section also details the design and training of \moge, along with the data generation process \edit{and the importance of dense sampling for the same}. Section~\ref{sec:4} conducts simulations to comparatively analyze the proposed method against other CS approaches. The conclusion is presented in Section~\ref{sec:5}, accompanied by key proofs provided in the Appendix.
	
	\emph{Notation:} Matrices and vectors are depicted in bold typeface. 
	The $n$-dimensional real space is denoted by $\mathbb{R}^n$. $[n]$ denotes the set $\{1,\dots,n\}$. For a vector $\mb{a}\in\real{n}$, $\mb{a}_i$ is its $\tth{i}$ element while $[\mb{a}]_{\mcal{S}}$ denotes the subvector corresponding to a set $\mcal{S}\subseteq [n]$. $|\mcal{S}|$ is the cardinality of set $\mcal{S}$. $\mb{a}\mleq \mb{b}$ denotes elementwise inequality between the two vectors, while $(\mb{a},\mb{b})$ denotes their concatenation. $\nabla f(\mb{x})$ denotes the gradient or Jacobian matrix of the function $f(\mb{x})$ while $\nabla^2 f(\mb{x})$ is its Hessian. $\mb{e}_i$ is the $\tth{i}$ canonical vector and $\mb{I}_n$ is the $n\times n$ identity matrix.
	$\mathrm{conv}\{\mb{x}_1,\dots,\mb{x}_n\}\ldef \{\sum_{i=1}^n\pmb{\alpha}_i\mb{x}_i~|~\pmb{\alpha} \succeq \mb{0},\mb{1}^{\top}\pmb{\alpha}=1\}$ denotes the convex hull of a finite set $\{\mb{x}_i\}_{i=1}^n$.
	$\mathsf{j} = \sqrt{-1}$ is the imaginary unit.

	%
	\section{C-OPF as Parametric Convex Optimization}
	\label{sec:2}
	In this section, we represent C-OPF as a parametric convex optimization problem. This allows us to generate a well-defined mapping between problem parameters and optimal dual variables, which can then be used for CS. In general, constraints for any OPF problem can broadly be categorized into two types: the power flow equations and other operational constraints. The power flow model remains fixed across problem instances, while operational constraints, such as nodal power balance and thermal limits, may vary across problem instances. These constraints' bounds serve as problem parameters in our formulation.
	
	\subsection{Generic C-OPF Formulation}
	Concretely, we represent any C-OPF as the following parametric optimization problem:
	\begin{subequations}
		\label{prob:orig}
		\begin{empheq}[box=\widefbox]{align}
			\label{eq:obj-orig}
			\mcal{V}(\pmb{\gamma},\pmb{\xi})\ldef\min\limits_{\mb{x}} \quad& f(\mb{x})\\
			\label{eq:non-param-cons-orig}
			\text{s.t.} \quad& \mb{g}(\mb{x})\mleq \mb{0}\;\;(\pmb{\lambda}),\quad \mb{h}(\mb{x}) = \mb{0}\;\;(\pmb{\mu})\\
			\label{eq:param-cons-orig}
			\quad& \tilde{\mb{g}}(\mb{x}) \mleq \pmb{\gamma}\;\;(\tilde{\pmb{\lambda}}), \quad \tilde{\mb{h}}(\mb{x}) = \pmb{\xi}\;\;(\tilde{\pmb{\mu}})\\
			\label{eq:var-bounds}
			\quad& {\underline{\mb{x}} \preceq \mb{x} \preceq \bar{\mb{x}}}\;\;(\underline{\pmb{\nu}},\bar{\pmb{\nu}}).
		\end{empheq}
	\end{subequations}
	Here, $\mb{x}\in\real{n}$ is the decision variable whose exact description depends on the underlying power flow model. The convex objective function $f:\real{n}\mapsto\real{}$ typically represents the total generation cost or power line losses over the network. Constraint functions $\mb{g}:\real{n}\mapsto \real{L}$ and $\mb{h}:\real{n}\mapsto \real{M}$ denote the power flow model, while $\tilde{\mb{g}}:\real{n}\mapsto \real{\tilde{L}}$ and $\tilde{\mb{h}}:\real{n}\mapsto \real{\tilde{M}}$ denote all other operational constraints. The dual variables for each block of constraints are represented alongside in parentheses. \blue{We use the terminology that $\mcal{V}$ forms a \emph{family} of C-OPF problems under a given power flow model, while its specific values for a given topology and different $(\pmb{\gamma},\pmb{\xi})$ form \emph{instances}.}
	
	The bounds for the operational constraints are captured by the right-hand side parameters $\pmb{\gamma}$ and $\pmb{\xi}$, which are used to parameterize the problem. 
	For the parameter space, we consider open, nonempty subsets $\Gamma\subset \real{\tilde{L}}$ and $\Xi\subset \real{\tilde{M}}$ such that problem~\eqref{prob:orig} is feasible for all $(\pmb{\gamma},\pmb{\xi})\in \Gamma \times \Xi$. 
	
	\begin{defn}
		The feasible power flow set is defined as
		\begin{align*}
			\mcal{X}_{\mathrm{pf}} \ldef \{ \mb{x}: \mb{g}(\mb{x})\mleq \mb{0},\mb{h}(\mb{x})=\mb{0} \}.
		\end{align*}
	\end{defn}
	\begin{defn}
		The active set of the inequality constraint $\mb{g}(\mb{x})\mleq \mb{0}$ at point $\mb{x}$ is defined as
		\begin{align*}
			\mcal{A}_{\mb{g}}(\mb{x}) \ldef \{ i\in[L]:\mb{g}_i(\mb{x})=0 \}.
		\end{align*}
	\end{defn}

	In the sequel, we show how the quadratic-convex relaxation to OPF (QC-OPF)~\cite{CC-HLH-PVH:2016} can be represented in the form of~\eqref{prob:orig}. 
	QC relaxation is generally tighter than SOCP relaxation and faster to compute than SDP relaxation~\cite{SB-SHL-TT-BH:2015,CC-HLH-PVH:2016}.
	While QC-OPF serves as our demonstrative example, the proposed framework and analysis are applicable to other C-OPF models. \edit{For example, the present framework can also be applied to the convexified DistFlow OPF model (CDF-OPF), which is a nonlinear representation of power flows in low-voltage radial power networks. We relegate said analysis to~\ref{app:cdf-opf}.}
	
    \begin{table*}[tb!]
        \centering
        \caption{Representing QC-OPF~\eqref{eq:qc-all} as parametric convex optimization problem~\eqref{prob:orig}, where the box constraint~\eqref{eq:var-bounds} corresponds to constraints~\eqref{eq:qc-gen-lims}, \eqref{eq:qc-volt-lims}, and \eqref{eq:qc-volt-cross-lims}.}
        \resizebox{\linewidth}{!}{
        \renewcommand{\arraystretch}{1.4}
\begin{tabular}{l p{4.5cm}|l p{2.25cm}|l p{4cm}}
	\hline
	\multicolumn{2}{c|}{\textbf{Variables}} & \multicolumn{2}{c|}{\textbf{Constraints}} & \multicolumn{2}{c}{\textbf{Parameters}}\\ \hline
	\multirow{3}{*}{$\mb{x}$:} & \multirow{3}{*}{\begin{minipage}{0.2\linewidth}
		$\{P^g_k,Q^g_k\}_{k\in\mcal{G}}$,\\$\{P_{ij},Q_{ij} \}_{(i,j)\in\mcal{E}\cup\mcal{E}^R}$,\\
			$\{W^{\text{R}}_{ij},W^{\text{I}}_{ij}\}_{(i,j)\in\mcal{E}},
   \{W_{ii}\}_{i\in\mcal{N}}$
	\end{minipage}} & $g(\mb{x}), \tilde{g}(\mb{x})$: & \eqref{eq:qc-soc}--\eqref{eq:qc-ang-lims}, \eqref{eq:qc-flowlims} &$\pmb{\gamma}$: & \multirow{2}{*}{\begin{minipage}{0.2\linewidth}
	$\{\bar{S}_{ij}^2 \}_{(i,j)\in\mcal{E}}, \{\bar{S}_{ij}^2 \}_{(i,j)\in\mcal{E}}$\\{}
	\end{minipage}}\\
	& & & & & \\
	&  &$h(\mb{x}), \tilde{h}(\mb{x})$: & \eqref{eq:qc-flows}, \eqref{eq:qc-balance} & $\pmb{\xi}$: & $\{P^d_i,Q^d_i\}_{i\in\mcal{N}}$\\
	\hline
\end{tabular}
\renewcommand{\arraystretch}{1}
        }
        \label{tab:convert_to_param}
    \end{table*}
	
	\subsection{QC-OPF} 
	QC-OPF is a convex relaxation to the ACOPF for meshed networks, possibly multiple generators on each bus, and transformers and phase shifters on each line. Let $(\mcal{N},\mcal{E})$ denote the directed graph representing the power network, and $\mcal{E}^R$ the edges with the reversed orientation. 
	Let $N=|\mcal{N}|$ and $E = |\mcal{E}|$ denote the total number of buses and power lines.
	Each bus $i\in\mcal{N}$ is associated with the indices $\mcal{G}_i$ of generators attached to it, real and reactive demands $P^d_i,Q^d_i$, line charging admittance $g^s_i-\mathsf{j}b^s_i$, and minimum (maximum) voltage magnitude $\underline{V}_i$ ($\bar{V}_i$). 
	In addition, the real and reactive power outputs of the $\tth{k}$ generator are denoted by $P^g_k$ and $Q^g_k$, respectively. Set $\mcal{G}\ldef \cup_{i\in\mcal{N}}\mcal{G}_i$ collects all generation units.
	For each branch $(i,j)\in\mcal{E}\cup \mcal{E}^R$ we have the thermal limit $\bar{S}_{ij}$, and the minimum and maximum phase angle differences (PAD) such that $-\frac{\pi}{2}<\underline{\theta}_{ij} < \bar{\theta}_{ij}<\frac{\pi}{2}$.
	Each branch is represented by a $\Pi$-model with the corresponding line matrix
	\begin{align*}
		\bm{ \yff & \yft\\ \ytf & \ytt} \ldef \bm{ \left( y_{ij} + \mathsf{j}\frac{b^c_{ij}}{2} \right)\frac{1}{|T_{ij}|^2}
			& -\frac{y_{ij}}{T_{ij}^*} \\ -\frac{y_{ij}}{T_{ij}} & y_{ij} + \mathsf{j}\frac{b_{ij}^c}{2}},
	\end{align*}
	where we have line admittance $y_{ij}$, total charging susceptance $b^c_{ij}$, and transformer parameter $T_{ij} = \tau_{ij}\angle \theta^{\text{shift}}_{ij}$. 
	
	Let $V_i$ denote the voltage phasor at bus $i$ and define $W_{ij}=W^{\text{R}}_{ij}+\mathsf{j}W^{\text{I}}_{ij}\ldef V_iV_j^*$. To this end, the QC-OPF formulation is given as follows.
	\begin{subequations}
		\label{eq:qc-all}
		\begin{align}
			\label{eq:qc-obj}
			&\min \, \sum_{k\in\mcal{G}} c_{k,2}\left(P^g_k\right)^2 + c_{k,1}P^g_k + c_{k,0} \\ 
			&\text{subject to:} \notag \\
			&\left.
			\begin{array}{l}
				\pf_{ij} \orange{-} \re{\yff}W_{ii} \orange{-} \re{\yft}W^{\text{R}}_{ij} \orange{-} \im{\yft}W^{\text{I}}_{ij} = 0\\
				\qf_{ij} \orange{+} \im{\yff}W_{ii} \orange{-} \re{\yft}W^{\text{I}}_{ij} \orange{+} \im{\yft}W^{\text{R}}_{ij} = 0\\
				\pt_{ji} \orange{-} \re{\ytt}W_{jj} \orange{-} \re{\ytf}W^{\text{R}}_{ij} \orange{+} \im{\ytf}W^{\text{I}}_{ij} = 0\\
				\qt_{ji} \orange{+} \im{\ytt}W_{jj} \orange{+} \re{\ytf}W^{\text{I}}_{ij} \orange{+} \im{\ytf}W^{\text{R}}_{ij} = 0
			\end{array}
			\right\} 
			\left. \, \forall (i,j)\in\mcal{E}\right. \label{eq:qc-flows} \\
			\label{eq:qc-soc}
			&(W^{\text{R}}_{ij})^2 + (W^{\text{I}}_{ij})^2 - W_{ii}W_{jj} \leq 0,\, \forall (i,j)\in\mcal{E} \\
			\label{eq:qc-ang-lims}
			&\tan\left( \underline{\theta}_{ij} \right)W^{\text{R}}_{ij}\leq W^{\text{I}}_{ij}
			\leq \tan \left( \bar{\theta}_{ij}\right)W^{\text{R}}_{ij},\, \forall (i,j)\in\mcal{E}\\
			\label{eq:qc-balance}
			&\left.
			\begin{array}{l}
				\left( \sum\limits_{k\in\mcal{G}_i} P^g_k\right) -g^s_iW_{ii} - \sum\limits_{(i,j)\in\mcal{E}\cup\mcal{E}^R}P_{ij} = P^d_i\\
				\left( \sum\limits_{k\in\mcal{G}_i} Q^g_k\right) +b^s_iW_{ii} - \sum\limits_{(i,j)\in\mcal{E}\cup\mcal{E}^R}Q_{ij} = Q^d_i
			\end{array}\right\},\, \forall i \in\mcal{N}\\
			\label{eq:qc-flowlims}
			&P_{ij}^2 + Q_{ij}^2 \leq \bar{S}_{ij}^2,\quad
			P_{ji}^2 + Q_{ji}^2 \leq \bar{S}_{ij}^2,\, \forall (i,j)\in\mcal{E}\\
			\label{eq:qc-gen-lims}
			&
			\underline{P}_k^g \leq P^g_k \leq \bar{P}^g_k,\quad
			\underline{Q}_k^g \leq Q^g_k \leq \bar{Q}^g_k,\,\forall k \in \mcal{G}\\
			\label{eq:qc-volt-lims}
			&\underline{V}_i^2 \leq W_{ii} \leq \bar{V}_i^2,\,\forall i \in \mcal{N}\\
			\label{eq:qc-volt-cross-lims}
			&
			|W^{\text{R}}_{ij}| \leq \bar{V}_i\bar{V}_j,\quad
			|W^{\text{I}}_{ij}| \leq \bar{V}_i\bar{V}_j,\,
			\forall(i,j)\in\mcal{E}.
		\end{align}
	\end{subequations}
	In the above, the objective~\eqref{eq:qc-obj} is a convex quadratic function of real power generation cost. \eqref{eq:qc-flows} describes the forward and reverse flows on each line, while~\eqref{eq:qc-soc} denotes the SOCP relaxation of the cross-term dependence. \eqref{eq:qc-ang-lims} enforces the PAD limits on each branch.~\eqref{eq:qc-balance} represents the nodal power balance, while line flow constraints~\eqref{eq:qc-flowlims} enforce the thermal limits. Finally,~\eqref{eq:qc-gen-lims},~\eqref{eq:qc-volt-lims}, and~\eqref{eq:qc-volt-cross-lims} represent the upper and lower bounds for the decision variables. 
	
	We can represent the QC-OPF~\eqref{eq:qc-all} in the generic formulation~\eqref{prob:orig}, as shown in Table~\ref{tab:convert_to_param}. The parameterized inequality $\tilde{\mb{g}}(\mb{x})\preceq \pmb{\gamma}$ contains the line flow constraints and the equality $\tilde{\mb{h}}(\mb{x})=\pmb{\xi}$ contains the power balance equations. The former by is parameterized by the thermal limits, and the latter by real and reactive load demands.

	\section{Constraint Screening for C-OPF}
	\label{sec:3}
	
	We divide this section into two parts. The first part provides analysis for generalized C-OPF of the form~\eqref{prob:orig}. In the second part, we present the proposed accelerated solution based on constraint screening.
	
	\subsection{Analytical Properties of C-OPF}
	We start with a standard result of the value function $\mcal{V}(\pmb{\gamma},\pmb{\xi})$.
	\begin{lemma}[Convexity of value function~{\cite[pp. 250]{SB-LV:2004}}]
		\label{lem:conv}
		$\mcal{V}(\pmb{\gamma},\pmb{\xi})$ is jointly convex in $\pmb{\gamma}$ and $\pmb{\xi}$ over any convex subset of $\Gamma\times \Xi$.
	\end{lemma}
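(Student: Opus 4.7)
The plan is to run the classical perturbation-function argument for parametric convex programs, specialized to~\eqref{prob:orig}. I would fix two points $(\pmb{\gamma}_1,\pmb{\xi}_1),(\pmb{\gamma}_2,\pmb{\xi}_2)$ in a given convex subset of $\Gamma\times\Xi$ and any $\theta\in[0,1]$, pick corresponding optimal primal points $\mb{x}_1,\mb{x}_2$ (or $\epsilon$-optimal points if the infimum is not attained), and use as a candidate for the combined parameter pair the convex combination $\mb{x}_\theta\ldef\theta\mb{x}_1+(1-\theta)\mb{x}_2$.

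The core step is verifying feasibility of $\mb{x}_\theta$ at $(\theta\pmb{\gamma}_1+(1-\theta)\pmb{\gamma}_2,\,\theta\pmb{\xi}_1+(1-\theta)\pmb{\xi}_2)$. Componentwise convexity of $\mb{g}$ gives $\mb{g}(\mb{x}_\theta)\mleq\theta\mb{g}(\mb{x}_1)+(1-\theta)\mb{g}(\mb{x}_2)\mleq\mb{0}$, and analogously $\tilde{\mb{g}}(\mb{x}_\theta)\mleq\theta\pmb{\gamma}_1+(1-\theta)\pmb{\gamma}_2$. The box~\eqref{eq:var-bounds} is preserved because $[\underline{\mb{x}},\bar{\mb{x}}]$ is a convex set. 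The main obstacle is the equality constraints: a convex combination of two feasible points will satisfy $\mb{h}(\mb{x}_\theta)=\mb{0}$ and $\tilde{\mb{h}}(\mb{x}_\theta)=\theta\pmb{\xi}_1+(1-\theta)\pmb{\xi}_2$ only when $\mb{h}$ and $\tilde{\mb{h}}$ are affine. I would make this hypothesis explicit, noting that it is implicit in calling~\eqref{prob:orig} a convex program and that it is satisfied in the running example of QC-OPF, where the equality blocks~\eqref{eq:qc-flows} and~\eqref{eq:qc-balance} are linear in the lifted variables $\{P,Q,W\}$.

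With feasibility of $\mb{x}_\theta$ established, convexity of $f$ delivers
\begin{align*}
\mcal{V}\bigl(\theta\pmb{\gamma}_1+(1-\theta)\pmb{\gamma}_2,\theta\pmb{\xi}_1+(1-\theta)\pmb{\xi}_2\bigr) &\leq f(\mb{x}_\theta) \\
&\leq \theta f(\mb{x}_1)+(1-\theta)f(\mb{x}_2),
\end{align*}
and passing $\epsilon\to 0$ (or taking exact optima) on the right-hand side yields the joint-convexity inequality. The argument is purely structural and makes no use of OPF-specific content, which is precisely why the textbook result of Boyd and Vandenberghe can be cited in lieu of a full derivation; the only substantive check is the affinity of the equality-constraint maps, which I would state as the implicit assumption making~\eqref{prob:orig} genuinely convex.
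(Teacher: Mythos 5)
Your proof is correct and is precisely the standard perturbation-function argument that the paper itself invokes by citing Boyd and Vandenberghe; the paper offers no independent derivation of Lemma~\ref{lem:conv}. Your explicit remark that the equality-constraint maps $\mb{h}$ and $\tilde{\mb{h}}$ must be affine for the convex combination $\mb{x}_\theta$ to remain feasible is the right technical caveat, and it is indeed implicit in the paper's treatment of~\eqref{prob:orig} as a convex program (and satisfied by the linear equality blocks of QC-OPF).
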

	
	\begin{corollary}[Feasibility in the parameter space]
		\label{coro:convex_comb_params}
		Assume problem~\eqref{prob:orig} is feasible for each point in a finite set $\{(\pmb{\gamma}^{(k)},\pmb{\xi}^{(k)})\}_{k\in [K]}$. Then, the problem remains feasible for any point $({\pmb{\gamma}},{\pmb{\xi}}) \in \mathrm{conv}\{(\pmb{\gamma}^{(1)},\pmb{\xi}^{(1)}),\dots, (\pmb{\gamma}^{(K)},\pmb{\xi}^{(K)})\}$.
	\end{corollary}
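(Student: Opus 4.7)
The plan is to prove feasibility by explicit construction: for each parameter point $(\pmb{\gamma}^{(k)},\pmb{\xi}^{(k)})$, pick a witness feasible solution $\mb{x}^{(k)}\in\mcal{X}_{\mathrm{pf}}$ satisfying all the operational constraints at that parameter, then show that the convex combination of these witnesses is feasible at the corresponding convex combination of parameters. Concretely, fix $\pmb{\alpha}\succeq\mb{0}$ with $\mb{1}^{\top}\pmb{\alpha}=1$, set $\bar{\mb{x}}\ldef \sum_{k=1}^{K}\pmb{\alpha}_k\mb{x}^{(k)}$ and $(\bar{\pmb{\gamma}},\bar{\pmb{\xi}})\ldef\sum_{k=1}^{K}\pmb{\alpha}_k(\pmb{\gamma}^{(k)},\pmb{\xi}^{(k)})$, and verify each block of~\eqref{prob:orig} in turn.

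The verifications are all short. For the inequality blocks, since $\mb{g}$ and $\tilde{\mb{g}}$ are convex (as required for~\eqref{prob:orig} to be a convex program) I would use Jensen's inequality: $\mb{g}(\bar{\mb{x}})\mleq \sum_k \pmb{\alpha}_k\mb{g}(\mb{x}^{(k)})\mleq \mb{0}$, and analogously $\tilde{\mb{g}}(\bar{\mb{x}})\mleq \sum_k \pmb{\alpha}_k\tilde{\mb{g}}(\mb{x}^{(k)})\mleq\sum_k\pmb{\alpha}_k\pmb{\gamma}^{(k)}=\bar{\pmb{\gamma}}$. For the box constraint~\eqref{eq:var-bounds}, a convex combination of points in $[\underline{\mb{x}},\bar{\mb{x}}]$ remains in the box. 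For the equality blocks, I would invoke the fact that in a convex optimization formulation the equality constraints must be affine: writing $\mb{h}(\mb{x})=\mb{A}\mb{x}+\mb{b}$ and $\tilde{\mb{h}}(\mb{x})=\tilde{\mb{A}}\mb{x}+\tilde{\mb{b}}$, the convex combination commutes through, giving $\mb{h}(\bar{\mb{x}})=\sum_k\pmb{\alpha}_k\mb{h}(\mb{x}^{(k)})=\mb{0}$ and $\tilde{\mb{h}}(\bar{\mb{x}})=\sum_k\pmb{\alpha}_k\pmb{\xi}^{(k)}=\bar{\pmb{\xi}}$. Together these establish that $\bar{\mb{x}}$ is feasible for $(\bar{\pmb{\gamma}},\bar{\pmb{\xi}})$, proving the corollary.

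The only subtle point, and the closest thing to an obstacle, is the treatment of the equality constraints $\mb{h}$ and $\tilde{\mb{h}}$: the construction above fails for nonlinear equalities, so I would need to either make affinity explicit as a standing assumption embedded in the convexity of~\eqref{prob:orig}, or equivalently argue that the feasible set at each parameter is convex and that the parameter-to-feasible-set correspondence has a convex graph, so its projection onto the parameter space (i.e., the feasible parameter set) is convex. Either formulation yields the result; I would prefer the direct affine argument because it reuses the witnesses $\mb{x}^{(k)}$ explicitly, which dovetails naturally with the subsequent use of Lemma~\ref{lem:conv} for the value function.
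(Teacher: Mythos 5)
Your proof is correct, but it takes a different route from the paper. The paper's argument is a two-line consequence of Lemma~\ref{lem:conv}: treating $\mcal{V}$ as an extended-real-valued convex function of $(\pmb{\gamma},\pmb{\xi})$ (with value $+\infty$ where the problem is infeasible), Jensen's inequality gives $\mcal{V}(\tilde{\pmb{\gamma}},\tilde{\pmb{\xi}})\leq\sum_k \pmb{\alpha}_k\mcal{V}(\pmb{\gamma}^{(k)},\pmb{\xi}^{(k)})<\infty$, and finiteness of the value at the combination is precisely feasibility. You instead construct an explicit feasible witness $\bar{\mb{x}}=\sum_k\pmb{\alpha}_k\mb{x}^{(k)}$ and check each constraint block, which is essentially the argument that underlies the cited convexity lemma, unfolded. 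Each approach buys something: the paper's version is shorter given that Lemma~\ref{lem:conv} is already in hand and silently absorbs the structural hypotheses into the phrase ``convex problem''; yours is more elementary and self-contained, produces a concrete feasible point (potentially useful as a warm start in Algorithm~\ref{alg:dataset}), and correctly surfaces the one hypothesis both proofs secretly rely on --- that $\mb{h}$ and $\tilde{\mb{h}}$ are affine, which holds for QC-OPF~\eqref{eq:qc-flows},~\eqref{eq:qc-balance} and CDF-OPF~\eqref{eq:cdf-real-balance}--\eqref{eq:cdf-volt-curr-power},~\eqref{eq:cdf-real-inj}--\eqref{eq:cdf-reac-inj} but is not stated explicitly in the generic form~\eqref{prob:orig}. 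Your handling of that caveat is sound, so there is no gap; it is simply a constructive proof where the paper gives an abstract one.
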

	This corollary supports our data generation Algorithm~\ref{alg:dataset}, 
	and has a simple proof. $\mcal{V}(\pmb{\gamma},\pmb{\xi})$ is finite because the problem is feasible for each point $(\pmb{\gamma}^{(k)},\pmb{\xi}^{(k)})$. Due to the convexity of $\mcal{V}$, we have
	$\mcal{V}(\tilde{\pmb{\gamma}},\tilde{\pmb{\xi}})\leq\sum_k \pmb{\alpha}_k\mcal{V}(\pmb{\gamma}^{(k)},\pmb{\xi}^{(k)}) < \infty$, where $(\tilde{\pmb{\gamma}},\tilde{\pmb{\xi}})\ldef \sum_k \pmb{\alpha}_k(\pmb{\gamma}^{(k)},\pmb{\xi}^{(k)})$ for any $\pmb{\alpha} \succeq \mb{0}$ and $\mb{1}^\top \pmb{\alpha}=1$. This confirms that the problem is feasible for $(\tilde{\pmb{\gamma}},\tilde{\pmb{\xi}})$.
	%
	The {partial Lagrangian} function of~\eqref{prob:orig} can be written as
	\begin{align*}
		\mcal{L}(\mb{x},\pmb{\lambda},\tilde{\pmb{\lambda}},\pmb{\mu},\tilde{\pmb{\mu}}) \ldef f(\mb{x}) + \pmb{\lambda}^\top \mb{g}(\mb{x}) + \pmb{\mu}^\top \mb{h}(\mb{x}) 
		+ \tilde{\pmb{\lambda}}^\top (\tilde{\mb{g}}(\mb{x})-\pmb{\gamma}) + \tilde{\pmb{\mu}}^\top (\tilde{\mb{h}}(\mb{x})-\pmb{\xi}).
	\end{align*}
	The dual problem is
	\begin{align}
		\label{eq:dual_problem}
		\sup\limits_{\pmb{\lambda}\succeq\mb{0},\tilde{\pmb{\lambda}}\succeq\mb{0},\pmb{\mu},\tilde{\pmb{\mu}}} D(\pmb{\lambda},\tilde{\pmb{\lambda}},\pmb{\mu},\tilde{\pmb{\mu}}),
	\end{align}
	with the dual objective  
	\begin{align}
		\label{eq:dual_obj}
		D(\pmb{\lambda},\tilde{\pmb{\lambda}},\pmb{\mu},\tilde{\pmb{\mu}}) &\ldef \inf\limits_{{\underline{\mb{x}}\preceq\mb{x}\preceq\bar{\mb{x}}}} 	\mcal{L}(\mb{x},\pmb{\lambda},\tilde{\pmb{\lambda}},\pmb{\mu},\tilde{\pmb{\mu}}) \notag\\ 
		&= 
		-\tilde{\pmb{\lambda}}^\top \pmb{\gamma} - \tilde{\pmb{\mu}}^\top \pmb{\xi} + \inf\limits_{{\underline{\mb{x}}\preceq\mb{x}\preceq\bar{\mb{x}}}} \big\{ f(\mb{x}) + \pmb{\lambda}^\top \mb{g}(\mb{x}) \notag\\
		&\hspace{1cm}+\pmb{\mu}^\top \mb{h}(\mb{x}) + \tilde{\pmb{\lambda}}^\top \tilde{\mb{g}}(\mb{x}) + \tilde{\pmb{\mu}}^\top \tilde{\mb{h}}(\mb{x})\big\}.
	\end{align} 
	If strong duality holds (meaning the optimal values of the primal and dual problems are equal),
	the structure in~\eqref{eq:dual_obj} indicates that gradients of $\mcal{V}(\pmb{\gamma},\pmb{\xi})$ exist and can be expressed as functions of the optimal dual solutions.
	Strong duality is not guaranteed in general, though it always holds for feasible linear programs~\cite[Chap.~6]{JM-BG:2006}.
	For convex problems, various results establish conditions under which strong duality holds. These conditions are called constraint qualifications (a.k.a. regularity conditions). Constraint qualifications guarantee that a constrained minimizer satisfies the well-defined KKT system.
	
	Next, we show that the special structure of QC-OPF~\eqref{eq:qc-all} ensures strong duality by satisfying the \emph{linear independence constraint qualification (LICQ)}. We then generalize this finding to a test which can be used on arbitrary C-OPF models.
	\begin{lemma}[Fixed LICQ]
		\label{lemma:fixLICQ}
		Given QC-OPF~\eqref{eq:qc-all}, for all $\tilde{\mb{x}}\in\mcal{X}_{\pff}$ the LICQ holds with respect to the ``fixed'' constraints $\mb{g}$ and $\mb{h}$, i.e., the Jacobian matrix of the binding constraints is full row rank\footnote{For simplicity, full row rank is referred to as full rank hereinafter.}: 
		\begin{align*}
			\mathrm{rank} \left(\mb{J}(\tilde{\mb{x}})\ldef \bm{ \nabla [\mb{g}(\tilde{\mb{x}})]_{\mcal{A}_{\mb{g}}(\tilde{\mb{x}})}\\ \nabla \mb{h}(\tilde{\mb{x}})} \right) = M + |\mcal{A}_{\mb{g}}(\tilde{\mb{x}})|.
		\end{align*}
	\end{lemma}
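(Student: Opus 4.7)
The plan is to exploit the sparsity pattern of the QC-OPF Jacobian to decouple the rank analysis. Recall that under the identification in Table~\ref{tab:convert_to_param}, the fixed equality block $\mb{h}$ corresponds to the branch-flow equations~\eqref{eq:qc-flows} and the fixed inequality block $\mb{g}$ collects the SOC relaxation~\eqref{eq:qc-soc} together with the PAD bounds~\eqref{eq:qc-ang-lims}. My first step is to observe that each of the four equations in~\eqref{eq:qc-flows} associated with a branch $(i,j)\in\mcal{E}$ is affine and contains a unique pivot variable---one of $P_{ij},Q_{ij},P_{ji},Q_{ji}$---with coefficient $+1$, and that this variable appears nowhere else in $\mb{h}$. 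Collected across all branches, $\nabla\mb{h}(\tilde{\mb{x}})$ therefore contains a permutation-identity submatrix in the $\{P_{ij},Q_{ij}\}_{(i,j)\in\mcal{E}\cup\mcal{E}^R}$ columns, already yielding $\mathrm{rank}(\nabla\mb{h})=M=4E$.

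The critical structural observation is that no active inequality gradient has any support in those branch-flow columns, since neither~\eqref{eq:qc-soc} nor~\eqref{eq:qc-ang-lims} involves any $P_{ij}$ or $Q_{ij}$. Setting $\sum_{r\in\mcal{A}_{\mb{g}}(\tilde{\mb{x}})} c_r \nabla g_r(\tilde{\mb{x}}) + \sum_m d_m \nabla h_m(\tilde{\mb{x}}) = \mb{0}$ and reading off the $\{P_{ij},Q_{ij}\}$ columns first, the permutation-identity structure immediately forces every $d_m=0$. It then remains to show that $\{\nabla g_r(\tilde{\mb{x}})\}_{r\in\mcal{A}_{\mb{g}}(\tilde{\mb{x}})}$ is linearly independent on its own.

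This last step I would argue branchwise. The gradients of~\eqref{eq:qc-soc}--\eqref{eq:qc-ang-lims} associated with a branch $(i,j)$ have nonzero entries only in $\{W^{\text{R}}_{ij},W^{\text{I}}_{ij},W_{ii},W_{jj}\}$, and $W^{\text{R}}_{ij},W^{\text{I}}_{ij}$ are exclusive to that branch, so restricting the leftover combination to these two columns decouples the problem across branches. Within a single branch, an enumeration of which subset of $\{\text{SOC},\text{PAD}_L,\text{PAD}_U\}$ is active closes the argument. If both PADs are active then $W^{\text{R}}_{ij}=W^{\text{I}}_{ij}=0$, which renders SOC strictly inactive because $W_{ii},W_{jj}>0$, so the three are never simultaneously active; the two PAD gradients restricted to $(W^{\text{R}}_{ij},W^{\text{I}}_{ij})$ are $(\tan\underline{\theta}_{ij},-1)$ and $(-\tan\bar{\theta}_{ij},1)$, whose determinant $\tan\underline{\theta}_{ij}-\tan\bar{\theta}_{ij}$ is nonzero by the strict PAD bound; and whenever SOC is active its restriction $(2W^{\text{R}}_{ij},2W^{\text{I}}_{ij})$ is nonzero because $(W^{\text{R}}_{ij})^2+(W^{\text{I}}_{ij})^2=W_{ii}W_{jj}>0$, so pairing it with at most one active PAD yields a nonvanishing $2\times 2$ minor after substituting the active PAD equality into the SOC gradient.

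The main obstacle will be the SOC-plus-PAD subcase, where linear independence is not apparent from the raw gradient expressions alone but only after using the activity equation of the PAD to eliminate one of $W^{\text{R}}_{ij},W^{\text{I}}_{ij}$ in the SOC gradient. The resulting $2\times 2$ determinant simplifies to a multiple of $\sec^2(\cdot)\,W^{\text{R}}_{ij}$ (or $W^{\text{I}}_{ij}$), which is nonzero precisely because SOC activity precludes $W^{\text{R}}_{ij}=W^{\text{I}}_{ij}=0$---in other words, the SOC boundary and each PAD hyperplane meet transversally at every feasible point. Once this transversality fact is in hand, the proof closes: the pivot argument in~\eqref{eq:qc-flows} kills every equality coefficient $d_m$, and the per-branch analysis kills every active-inequality coefficient $c_r$, so $\mb{J}(\tilde{\mb{x}})$ has full row rank $M+|\mcal{A}_{\mb{g}}(\tilde{\mb{x}})|$ as claimed.
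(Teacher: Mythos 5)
Your proposal is correct and follows essentially the same route as the paper's proof: you use the unit coefficients of $P_{ij},Q_{ij},P_{ji},Q_{ji}$ in~\eqref{eq:qc-flows} to eliminate the equality multipliers (the paper's $\mb{I}_{4E}$ block), then decouple the active SOC/PAD gradients branchwise via the $W^{\text{R}}_{ij},W^{\text{I}}_{ij}$ columns and resolve the simultaneous SOC-plus-PAD case through the nonvanishing $1+\tan^2(\cdot)$ factor, which is exactly the paper's pivot after its elementary row operation. The only difference is presentational --- a linear-combination argument in place of the paper's row-echelon-form reduction of $\mb{J}_g$.
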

	The proof is provided in~\ref{proof:fixLICQ}. This lemma allows us to extend LICQ to all constraints of QC-OPF over the entire parameter space. This is established in the following theorem.
	
	\begin{thm}[Genericity of LICQ]
		\label{thm:LICQ}
		For every feasible point of QC-OPF~\eqref{eq:qc-all} and for 
		almost everywhere in the parameter space $\Gamma \times \Xi$, the LICQ holds with respect to all constraints $\mb{g},\mb{h},\tilde{\mb{g}},\tilde{\mb{h}}$ upon satisfaction of Lemma~\ref{lemma:fixLICQ}. Thus, strong duality holds for QC-OPF.
	\end{thm}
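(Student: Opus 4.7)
The plan is to bootstrap directly from Lemma~\ref{lemma:fixLICQ} by upgrading its full-rank statement from the fixed block $(\mb{g},\mb{h})$ to the complete active Jacobian that additionally includes the parameterized rows $\tilde{\mb{g}},\tilde{\mb{h}}$. I would organize the proof in three stages: first exploit a \emph{structural} (parameter-independent) column decoupling that makes the power-balance rows always independent of the fixed block; second handle the parameterized thermal-limit inequalities with an almost-everywhere transversality argument over $\Gamma\times\Xi$; and third invoke the standard convex-programming fact that pointwise LICQ at the optimum implies KKT necessity and hence strong duality.

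For the structural stage, I would inspect the column sparsity of the full Jacobian using Table~\ref{tab:convert_to_param}. The $2N$ power-balance rows $\nabla\tilde{\mb{h}}$ are the only rows in the system with nonzero entries in the generator-output columns $P^g_k,Q^g_k$, because the fixed block $\mb{g}$ (SOCP, PAD) and $\mb{h}$ (branch-flow equations) depend only on $W_{ii},W^{\text{R}}_{ij},W^{\text{I}}_{ij},P_{ij},Q_{ij}$, as do the thermal-limit rows of $\tilde{\mb{g}}$. Since the generator-column partition is disjoint across nodes, the $\nabla\tilde{\mb{h}}$ rows are mutually independent and independent of any row in $\mb{J}(\tilde{\mb{x}})$. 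Combined with Lemma~\ref{lemma:fixLICQ}, this shows that the extended block $[\nabla[\mb{g}]_{\mcal{A}_{\mb{g}}};\,\nabla\mb{h};\,\nabla\tilde{\mb{h}}]$ has full row rank at every feasible point, for every $(\pmb{\gamma},\pmb{\xi})\in\Gamma\times\Xi$.

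For the almost-everywhere stage, I would address the thermal-limit rows $\nabla\tilde{\mb{g}}$, whose entries $(2P_{ij},2Q_{ij})$ on the flow columns could, in exceptional configurations, become linearly dependent on the already full-rank extended block. For each candidate active set $\mcal{A}_{\tilde{\mb{g}}}\subseteq[\tilde{L}]$, the rank-drop condition is the simultaneous vanishing of finitely many polynomial minors in $\mb{x}$, defining a proper algebraic subvariety $V_{\mcal{A}_{\tilde{\mb{g}}}}\subset\mathbb{R}^n$. The set of parameters $(\pmb{\gamma},\pmb{\xi})$ that admit such a degenerate feasible point is the image of $V_{\mcal{A}_{\tilde{\mb{g}}}}\cap\mcal{X}_{\pff}$ under the smooth map $\mb{x}\mapsto(\tilde{\mb{g}}(\mb{x}),\tilde{\mb{h}}(\mb{x}))$ restricted to the active indices, hence a smooth image of a lower-dimensional set and therefore Lebesgue-null by Sard's theorem. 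Taking the finite union over all $\mcal{A}_{\tilde{\mb{g}}}\subseteq[\tilde{L}]$ preserves measure zero, so LICQ on the full constraint system holds for almost all $(\pmb{\gamma},\pmb{\xi})\in\Gamma\times\Xi$.

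The final stage is routine: LICQ at every feasible point of a convex program is a constraint qualification that makes the KKT system both necessary and sufficient for optimality, which forces zero duality gap and thus strong duality for problem~\eqref{prob:orig} instantiated as QC-OPF. I expect the main obstacle to be the genericity argument in the second stage, specifically verifying that $V_{\mcal{A}_{\tilde{\mb{g}}}}$ is a \emph{proper} subvariety rather than all of $\mathbb{R}^n$; this requires exhibiting at least one feasible point where the thermal-limit gradient is demonstrably independent of the extended block, which should follow from choosing $\mb{x}$ with generic flow values and invoking the disjoint column structure between thermal limits and the SOCP/PAD/branch-flow rows.
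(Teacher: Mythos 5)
Your strategy---upgrading Lemma~\ref{lemma:fixLICQ} to the full active Jacobian by a hand-rolled transversality argument---is a genuinely different route from the paper, which instead invokes the genericity theorem of Hauswirth \emph{et al.}~\cite{AH-SB-GH-FD:2018}. In that theorem one only has to verify (i) smoothness of all constraint functions, (ii) fixed LICQ on $\mcal{X}_{\mathrm{pf}}$ (Lemma~\ref{lemma:fixLICQ}), and (iii) full rank of the \emph{parameter}-to-constraint map $(\pmb{\gamma},\pmb{\xi})\mapsto(\tilde{\mb{g}}(\mb{x})-\pmb{\gamma},\tilde{\mb{h}}(\mb{x})-\pmb{\xi})$, which is immediate because its Jacobian in the parameters is a diagonal sign matrix; the parametric Sard machinery is then entirely outsourced. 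Your attempt to redo that machinery directly has two genuine gaps.

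First, your structural stage claims the $2N$ power-balance rows are automatically independent of the fixed block because they alone touch the generator columns $P^g_k,Q^g_k$. This works only at buses with $\mcal{G}_i\neq\varnothing$. In realistic networks most buses carry no generator (e.g.\ \texttt{case118} has 118 buses but 54 generators), and at such buses the balance rows live entirely on the $W_{ii},P_{ij},Q_{ij}$ columns already occupied by the branch-flow, SOC, and PAD rows, so their independence is not structural and indeed need not hold for \emph{every} parameter---which is exactly why the theorem's conclusion is only almost-everywhere in $(\pmb{\gamma},\pmb{\xi})$. Second, the measure-zero step is not set up correctly. A bad parameter requires $[\pmb{\gamma}]_{\mcal{A}_{\tilde{\mb{g}}}}=[\tilde{\mb{g}}(\mb{x})]_{\mcal{A}_{\tilde{\mb{g}}}}$ and $\pmb{\xi}=\tilde{\mb{h}}(\mb{x})$ with the remaining coordinates of $\pmb{\gamma}$ free, so nullity of the bad set reduces to nullity of the image of $V_{\mcal{A}_{\tilde{\mb{g}}}}\cap\mcal{X}_{\pff}$ in $\real{|\mcal{A}_{\tilde{\mb{g}}}|+\tilde{M}}$. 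Knowing only that $V_{\mcal{A}_{\tilde{\mb{g}}}}$ is a proper subvariety of $\real{n}$ gives dimension at most $n-1$, which in general exceeds $|\mcal{A}_{\tilde{\mb{g}}}|+\tilde{M}$; ``smooth image of a lower-dimensional set'' is therefore not enough, and this is a statement about images rather than critical values, so it is not Sard's theorem. The correct tool is the parametric transversality theorem applied to the joint map in $(\mb{x},\pmb{\gamma},\pmb{\xi})$, whose joint regularity is exactly hypothesis (iii) above---i.e., the very condition the paper checks. Your closing worry about exhibiting a witness point for properness of $V_{\mcal{A}_{\tilde{\mb{g}}}}$ is also left unresolved. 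Your final stage (LICQ $\Rightarrow$ strong duality) matches the paper, which routes through LICQ $\Rightarrow$ MFCQ $\Rightarrow$ Slater.
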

	The proof is provided in~\ref{app:proof-thm-LICQ}. For the convex QC-OPF, strong duality implies $\mcal{V}(\pmb{\gamma},\pmb{\xi}) = -\tilde{\pmb{\lambda}}^{*\top} \pmb{\gamma} - \tilde{\pmb{\mu}}^{*\top} \pmb{\xi} + W$,
	where $W$ denotes additional terms not involving $\pmb{\gamma}$ or $\pmb{\xi}$ (cf.~\eqref{eq:dual_obj}). \blue{Lemma~\ref{lemma:fixLICQ} and Theorem~\ref{thm:LICQ} can be converted into a generalized test for strong duality for arbitrary C-OPF models beyond QC-OPF. \edit{It is sketched in the following remark and is demonstrated for CDF-OPF in~\ref{app:cdf-opf}.}
		\begin{remark}[Strong Duality Test for Arbitrary C-OPF]
            \label{rem:sd}
			For any C-OPF which can be written in the form~\eqref{prob:orig}, if the individual components of the functions $\mb{g}$, $\mb{h}$, $\tilde{\mb{g}}$, and $\tilde{\mb{h}}$ are smooth (i.e. they belong to class $C^\infty$) and the fixed constraints $\mb{g}$ and $\mb{h}$ satisfy Lemma~\ref{lemma:fixLICQ}, then strong duality holds almost everywhere on the open set $\Gamma \times \Xi$ representing the parameter space. Since this test only requires checking a rank condition for fixed constraints, it can be used to certify an C-OPF family at once.
	\end{remark}}
	LICQ (correspondingly, strong duality) ensures the existence and uniqueness of Lagrange multipliers satisfying the KKT system~\cite[Thm.~2]{GW:2013}. The closed form of $\nabla_{\pmb{\gamma}}\mcal{V}(\pmb{\gamma},\pmb{\xi})$ and $\nabla_{\pmb{\xi}}\mcal{V}(\pmb{\gamma},\pmb{\xi})$ follows immediately, as given below.
	
	\begin{prop}[Gradients of value function]
		\label{prop:grad}
		Given QC-OPF~\eqref{eq:qc-all}, 
		for almost every $(\pmb{\gamma},\pmb{\xi}) \in \Gamma \times \Xi$
		we have
		\begin{align}
			\nabla_{\pmb{\gamma}}\mcal{V}(\pmb{\gamma},\pmb{\xi}) = -\tilde{\pmb{\lambda}}^*,\quad \nabla_{\pmb{\xi}}\mcal{V}(\pmb{\gamma},\pmb{\xi}) = -\tilde{\pmb{\mu}}^*,
		\end{align}
		where $\tilde{\pmb{\lambda}}^*$ and $\tilde{\pmb{\mu}}^*$ are the unique optimal solution to problem~\eqref{eq:dual_problem}.
	\end{prop}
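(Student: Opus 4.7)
The plan is to leverage strong duality (established in Theorem~\ref{thm:LICQ}) together with the uniqueness of the optimal dual variables to obtain the claimed gradient formulas via a standard sensitivity/envelope argument.

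First, I would fix an arbitrary $(\pmb{\gamma},\pmb{\xi})\in\Gamma\times\Xi$ outside the measure-zero exceptional set identified in Theorem~\ref{thm:LICQ}, so that LICQ holds at the primal optimum and strong duality holds for problem~\eqref{prob:orig}. By \cite[Thm.~2]{GW:2013} (cited just before the proposition), LICQ then guarantees that the optimal dual tuple $(\pmb{\lambda}^*,\tilde{\pmb{\lambda}}^*,\pmb{\mu}^*,\tilde{\pmb{\mu}}^*)$ solving~\eqref{eq:dual_problem} is unique. From strong duality, I can write
\begin{align*}
\mcal{V}(\pmb{\gamma},\pmb{\xi}) = D(\pmb{\lambda}^*,\tilde{\pmb{\lambda}}^*,\pmb{\mu}^*,\tilde{\pmb{\mu}}^*)
= -\tilde{\pmb{\lambda}}^{*\top}\pmb{\gamma}-\tilde{\pmb{\mu}}^{*\top}\pmb{\xi} + W(\pmb{\lambda}^*,\tilde{\pmb{\lambda}}^*,\pmb{\mu}^*,\tilde{\pmb{\mu}}^*),
\end{align*}
where $W$ denotes the inner infimum in~\eqref{eq:dual_obj}, which has no explicit dependence on $(\pmb{\gamma},\pmb{\xi})$.

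Next, I would invoke the standard sensitivity principle for convex parametric programs: the subdifferential of the convex value function (convexity is given by Lemma~\ref{lem:conv}) with respect to the right-hand side parameters equals the (negative of the) set of optimal dual multipliers attached to the parameterized constraints, i.e.
\begin{align*}
\partial_{\pmb{\gamma}} \mcal{V}(\pmb{\gamma},\pmb{\xi}) = \{-\tilde{\pmb{\lambda}}^* : \tilde{\pmb{\lambda}}^* \text{ optimal}\}, \quad
\partial_{\pmb{\xi}} \mcal{V}(\pmb{\gamma},\pmb{\xi}) = \{-\tilde{\pmb{\mu}}^* : \tilde{\pmb{\mu}}^* \text{ optimal}\}.
\end{align*}
Because Theorem~\ref{thm:LICQ} guarantees uniqueness of $\tilde{\pmb{\lambda}}^*$ and $\tilde{\pmb{\mu}}^*$, these subdifferentials are singletons. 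A convex function with singleton subdifferential at a point is differentiable at that point with gradient equal to the unique subgradient, yielding the claimed formulas.

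The main obstacle is justifying the subdifferential identification cleanly; I would do this either via Danskin's theorem applied to the dual representation $\mcal{V}(\pmb{\gamma},\pmb{\xi})=\sup_{\pmb{\lambda}\succeq\mb{0},\tilde{\pmb{\lambda}}\succeq\mb{0},\pmb{\mu},\tilde{\pmb{\mu}}} D(\pmb{\lambda},\tilde{\pmb{\lambda}},\pmb{\mu},\tilde{\pmb{\mu}})$ (whose integrand is affine, hence differentiable, in $(\pmb{\gamma},\pmb{\xi})$ with derivatives $-\tilde{\pmb{\lambda}}$ and $-\tilde{\pmb{\mu}}$), or equivalently by a direct perturbation argument using the KKT system and the implicit function theorem (enabled by LICQ). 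Either route terminates in the same conclusion, and the measure-zero exceptional set carried over from Theorem~\ref{thm:LICQ} is exactly the ``almost every'' qualifier appearing in the statement.
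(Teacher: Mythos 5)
Your proposal is correct and follows essentially the same route as the paper, which simply asserts that the result ``follows immediately'' from strong duality, the affine dependence of the dual objective~\eqref{eq:dual_obj} on $(\pmb{\gamma},\pmb{\xi})$, and the uniqueness of multipliers under LICQ via~\cite[Thm.~2]{GW:2013}. Your write-up is in fact more careful than the paper's one-line argument: you correctly flag that $\tilde{\pmb{\lambda}}^*$ and $\tilde{\pmb{\mu}}^*$ themselves depend on the parameters, and you close that gap with the standard identification of $\partial\mcal{V}$ with the negated set of optimal dual multipliers plus the fact that a singleton subdifferential of a convex function at an interior point implies differentiability.
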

	\edit{An equivalent result holds for CDF-OPF as shown in~\ref{app:cdf-opf}, as well as any other C-OPF family which satisfy the criteria outlined in Remark~\ref{rem:sd}}. Since the optimal dual solution serves as the gradients of $\mcal{V}(\pmb{\gamma},\pmb{\xi})$, we can develop a customized neural network to learn these gradients and predict binding constraints for CS, as demonstrated in subsection~\ref{subsec:CS}.  
	Note that the dual solution-based DCOPF solvers (e.g.,~\cite{LC-YZ-BZ-2022}) can be considered as a special case of our framework.

	\subsection{Strict Complementarity}
	
	In this part, we justify using dual variables to remove non-binding constraints at the optimum by assuming the following condition for C-OPF.
	
	\begin{assumption}[Strict complementarity]
		\label{assmp:strict_comp}
		Given C-OPF~\eqref{prob:orig}, the strict complementarity condition holds for $\tilde{\mb{g}}$, i.e., for any optimal primal-dual solution we have
		\begin{align}
			\tilde{\pmb{\lambda}}^{*}_i>0, ~\mathrm{for~all}~i \in \mcal{A}_{\tilde{\mb{g}}}(\mb{x}^{*}).
		\end{align}
	\end{assumption}	
	The strict complementarity condition asserts that all binding inequality constraints have strictly positive multipliers (i.e., no degenerate inequalities). This condition \edit{has been assumed in previous works on OPF~\cite[Assumption 1]{MKS-VK-GBG-2022}}, and is observed empirically in our simulations. \edit{However, degeneracy has been observed to occur in certain C-OPF models used in energy markets~\cite{MM-markets}. In case the C-OPF is a conic convex problem, degeneracy can be removed by slightly perturbing the parameter and resolving. This stems from the fact that strict complementarity is a \emph{generic} property of conic convex problems~\cite[Theorem 5]{pataki2001generic}. We relegate a full analysis of CS with degenerate solutions to future works.}
	\begin{prop}[Constraint reduction equivalence]
		\label{lem:convex_exclusion}
		Suppose it is known \emph{a priori} that $\tilde{\pmb{\lambda}}^*_i = 0$ for $i\in\mcal{I}\subseteq [\tilde{L}]$. Then, replacing constraint $\tilde{\mb{g}}(\mb{x})\mleq \pmb{\gamma}$ in~\eqref{prob:orig} with $[\tilde{\mb{g}}(\mb{x})]_{[\tilde{L}]\setminus \mcal{I}}\mleq \pmb{\gamma}_{[\tilde{L}]\setminus \mcal{I}}$ results in an equivalent problem.
	\end{prop}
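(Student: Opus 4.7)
The plan is to establish equivalence by showing that the reduced and original problems share the same optimal value with a common optimal primal point. Denote problem~\eqref{prob:orig} by (P), and the reduced problem (with $[\tilde{\mb{g}}(\mb{x})]_{[\tilde{L}]\setminus \mcal{I}}\mleq \pmb{\gamma}_{[\tilde{L}]\setminus \mcal{I}}$ in place of $\tilde{\mb{g}}(\mb{x})\mleq \pmb{\gamma}$) by (P'), with value $\mcal{V}'(\pmb{\gamma},\pmb{\xi})$. One direction is immediate by a relaxation argument: since (P') only omits the rows indexed by $\mcal{I}$, its feasible set contains that of (P), so any minimizer $\mb{x}^*$ of (P) is feasible for (P'), giving $\mcal{V}'(\pmb{\gamma},\pmb{\xi})\leq f(\mb{x}^*)=\mcal{V}(\pmb{\gamma},\pmb{\xi})$.

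For the reverse inequality I would invoke KKT sufficiency for convex programs. By the strong duality results in Theorem~\ref{thm:LICQ} and Remark~\ref{rem:sd}, problem (P) admits an optimal primal-dual tuple $(\mb{x}^*,\pmb{\lambda}^*,\tilde{\pmb{\lambda}}^*,\pmb{\mu}^*,\tilde{\pmb{\mu}}^*,\underline{\pmb{\nu}}^*,\bar{\pmb{\nu}}^*)$ with uniquely determined duals, and by hypothesis $\tilde{\pmb{\lambda}}^*_i=0$ for every $i\in\mcal{I}$. I would then form the candidate primal-dual point for (P') by deleting those zero components, namely $\bigl(\mb{x}^*,\pmb{\lambda}^*,[\tilde{\pmb{\lambda}}^*]_{[\tilde{L}]\setminus\mcal{I}},\pmb{\mu}^*,\tilde{\pmb{\mu}}^*,\underline{\pmb{\nu}}^*,\bar{\pmb{\nu}}^*\bigr)$, and verify the four KKT conditions of (P') at this point: primal feasibility is trivial since (P') has a superset feasible region; dual feasibility holds because the retained multipliers are still non-negative; complementary slackness for the retained inequalities is inherited directly from (P); and the stationarity equation of (P') matches that of (P) term by term because the dropped Lagrangian contributions $\tilde{\pmb{\lambda}}^*_i\nabla \tilde{\mb{g}}_i(\mb{x}^*)$ all vanish. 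Convexity of (P') then guarantees that this KKT point is globally optimal, so $\mb{x}^*$ also solves (P') with $\mcal{V}'(\pmb{\gamma},\pmb{\xi})\geq f(\mb{x}^*)=\mcal{V}(\pmb{\gamma},\pmb{\xi})$, closing the chain of inequalities.

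The only subtle point, and what I would flag as the main obstacle, is ensuring that this transfer of KKT conditions does not silently require an additional constraint qualification for (P'). It does not: for a convex program, a feasible point satisfying the KKT system is globally optimal without any CQ assumption, and the hypothesis $\tilde{\pmb{\lambda}}^*_i=0$ on $\mcal{I}$ is precisely what makes the stationarity equation of (P') match that of (P). I would also note that Assumption~\ref{assmp:strict_comp} is not used in this argument; strict complementarity is only needed downstream to justify interpreting a predicted (or learned) zero multiplier as evidence that the associated constraint is non-binding, rather than for the equivalence claimed here.
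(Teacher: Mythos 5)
Your proof is correct and follows essentially the same route as the paper's (one-line) argument: transfer the KKT point of the original problem to the reduced problem by dropping the zero multipliers, then invoke KKT sufficiency for convex programs. Your version is in fact slightly more careful than the paper's, since you replace its loose ``and vice versa'' direction with the clean relaxation inequality $\mcal{V}'\leq\mcal{V}$, and you correctly note that no constraint qualification is needed for the sufficiency step.
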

	The proof is straightforward. Any optimal solution \(\mb{x}^*\) of the original problem satisfies the KKT system for the reduced problem, and vice versa. Consequently, the optimal solutions to both convex problems are identical. It is worth noting that Proposition~\ref{lem:convex_exclusion} does not hold for nonconvex problems in general. Consider the problem
	$$
	\min\limits_{x\leq 0} \quad (4x^2+x+1)(x^2-1),$$ where LICQ holds for all points. The KKT system is 
	\begin{gather*}
		\nabla_x \left( (4x^2+x+1)(x^2-1) + \lambda x\right) = 0\\
		x \leq 0,\quad \lambda \geq 0,\quad \lambda x = 0.
	\end{gather*}
	We can verify that $x^*=-0.6267,\lambda^*=0$ 
	is the globally optimal solution. Excluding the constraint $x\leq 0$ given $\lambda^*=0$, the optimal solution to the unconstrained problem becomes $x^* =  0.6043$. Therefore, the reduced problem is not equivalent to the original one. 
	
	\subsection{Constraint Screening via MoGE}\label{subsec:CS}
	\begin{figure*}[tb!]
		\centering
		\begin{subfigure}[t]{0.49\textwidth}
			\centering
			\includegraphics[height=2cm]{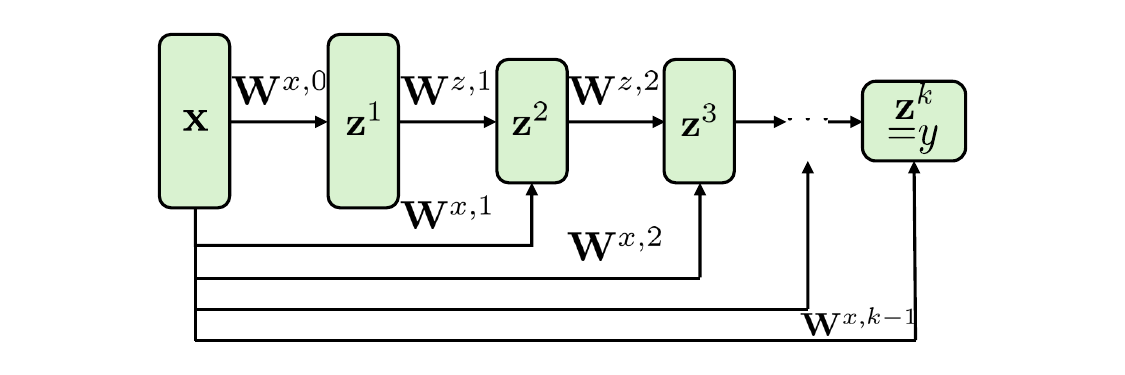}
			\caption{$\mb{y}=\icnn(\mb{x}|\pmb{\theta}_{icnn})$, with nonnegative weights $\{\mb{W}^{z,i}\}_{i=1}^{k-1}$ ensuring convexity of $\mb{y}$ in $\mb{x}$.}
		\end{subfigure}
		\hfill
		\begin{subfigure}[t]{0.49\textwidth}
			\centering
			\includegraphics[height=2cm]{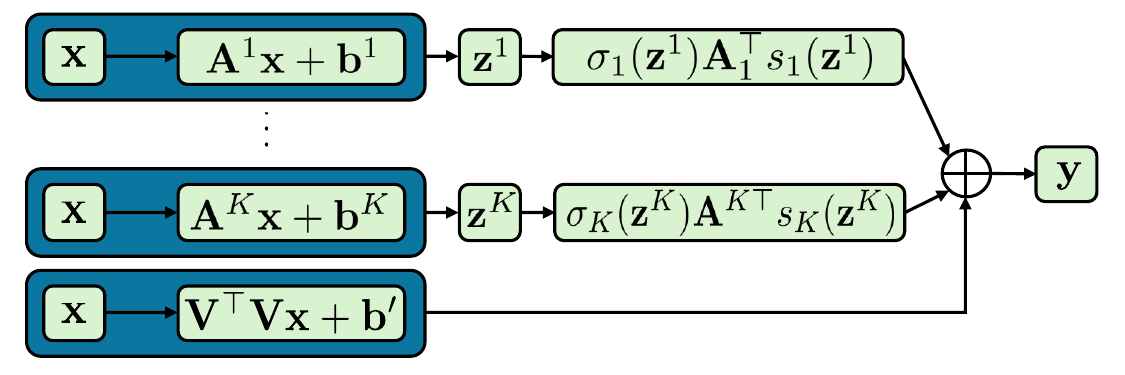}
			\caption{$\mb{y}=\mgn(\mb{x}|\pmb{\theta}_{mgn})$, where $\{\sigma_i(\,,\,)\}_{i=1}^K$ are convex, twice differentiable, nonnegative elementwise functions, and $\{s_i(\,.\,)\}_{i=1}^K$ are their respective derivatives.}
		\end{subfigure}
		\newline
		\hfill
		\begin{subfigure}[t]{0.49\textwidth}
			\centering
			\includegraphics[height=2cm]{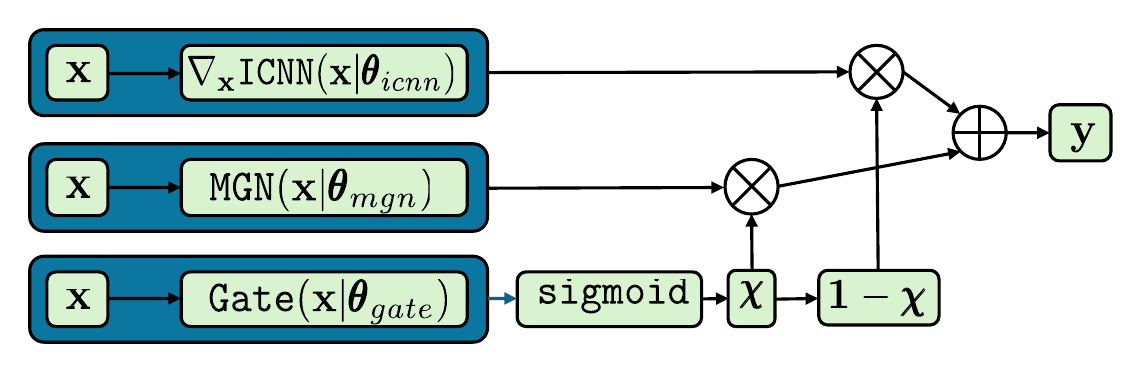}
			\caption{\moge\ architecture uses \icnn\ and \mgn\ alongside a gating network $\gate$, which is used to soft-select the better output from the two experts.}
		\end{subfigure}
		\caption{Schematic of the \moge\ architecture, alongside its primary constituent elements \icnn\ and \mgn.}
		\hfill
	\end{figure*}
	
	{
		In this section, we present the acceleration method for C-OPF by removing non-binding constraints from $\tilde{\mb{g}}(\mb{x})\mleq \pmb{\gamma}$. Our strategy involves training a model 
		which learns the gradients of $ \mcal{V}(\pmb{\gamma},\pmb{\xi})$, i.e. the mapping $(\pmb{\gamma},\pmb{\xi})\mapsto (-\tilde{\pmb{\lambda}}^*,-\tilde{\pmb{\mu}}^*)$ in supervised learning fashion. Since this mapping is the gradient of the value function (Proposition~\ref{prop:grad}), it motivates the ``gradient'' terminology used throught this paper. Any new problem instance is sped up by removing non-binding constraints identified from the predicted value of $\tilde{\pmb{\lambda}}^*$ using Assumption~\ref{assmp:strict_comp}.
		
		We aim to use machine learning models suitable for approximating gradients of convex functions, and consider two possible approaches. The first approach, used by input convex neural networks (ICNNs)~\cite{BA-LX-JZK:2017}, approximates the scalar-valued mapping $(\pmb{\gamma},\pmb{\xi}) \mapsto \mcal{V}(\pmb{\gamma},\pmb{\xi})$. Gradients are then generated by calculating $\nabla \mcal{V}$ using automatic differentiation (AD) during both training and inference.
		The second approach, utilized by monotone gradient networks (MGNs)~\cite{SC-SP-JMFM:2023}, directly learns the mapping $(\pmb{\gamma},\pmb{\xi})\mapsto (-\tilde{\pmb{\lambda}}^*,-\tilde{\pmb{\mu}}^*)$ without using AD. 
		
		A $K$-layer ICNN, denoted as $y = \mb{z}^K = \icnn(\mb{x}|\pmb{\theta}_{icnn})$, uses the architecture
		\begin{align*}
			&\mb{z}^{k+1} = \sigma_k \left(\mb{W}^{z,k} \mb{z}^k + \mb{W}^{x,k} \mb{x} + \mb{b}^k \right),\, k = 0,\dots,K-1,
		\end{align*}
		where $\mb{z}^i$ denotes the layer activations with $\mb{z}^0\equiv\mb{0}$ and $\mb{W}^{z,0}\equiv\mb{0}$. $\sigma_i$ are non-linear activation functions and $\pmb{\theta}_{icnn}=\{\mb{W}^{x,0:K-1},\mb{W}^{z,1:K-1},\mb{b}^{0:K-1}\}$ are the learnable parameters.
		If all weights $\mb{W}^{z,1:K-1}$ are nonnegative and all functions $\sigma_i$ are convex and non-decreasing, then \icnn\ is convex in its input-to-output mapping~\cite[Prop. 1]{BA-LX-JZK:2017}. Consequently, $\nabla_{\mb{x}}\icnn(\mb{x}|\pmb{\theta}_{icnn})$ can be used to learn the dual variables.
		On the other hand, \mgn\ is a shallow architecture with $K$ units given as
		\begin{align*}
			&\mb{y} = \mgn(\mb{x}|\pmb{\theta}_{mgn}) = \mb{b}' + \mb{V}^\top\mb{V}\mb{x} + \sum_{k=1}^K \sigma_k(\mb{z}^k)\mb{A}^{k\top} s_k(\mb{z}^k),\\
			& \mb{z}^k = \mb{A}^k\mb{x}+\mb{b}^k \;\forall k\in[K],\;\; \pmb{\theta}_{mgn} = \left\{ \mb{A}^{1:K},\mb{b}^{1:K},\mb{V},\mb{b}' \right\}.
		\end{align*}
		Given convex, nonnegative, and twice differentiable elementwise activations $\sigma_k$ with $s_k$ being $\sigma_k$'s elementwise derivative, it can be shown that the Jacobian of \mgn\ is positive semi-definite~\cite[Prop. 2]{SC-SP-JMFM:2023}. Therefore, \mgn\ is suitable for directly learning the gradients of convex functions.
		
		In Section~\ref{sec:4}, ablative testing will reveal limitations in the ability of either of these two models to effectively learn dual variables. To address this, we adopt a mixture-of-experts (MoE) architecture named \moge\ (Mixture of Gradient Experts). This approach integrates the strengths of both models using an auxiliary gating network, aiming to enhance robustness and performance in learning dual variables. MoE architectures, pioneered over three decades ago~\cite{RAJ-etal:1991}, are increasingly favored for their capability to leverage diverse model strengths effectively. Recent applications in large language models like Mixtral~\cite{MIXTRAL} underscore their suitability for complex tasks requiring nuanced learning and adaptation.

		Letting $\pmb{\chi}$ denote the elementwise sigmoid function, the overall output of \moge\ is
		\begin{align*}
			\mb{y} = \pmb{\chi}\left(\gate (\mb{x}|\pmb{\theta}_{gate})\right)\icnn(\mb{x}|\pmb{\theta}_{icnn}) 
			+ \left( \mb{1}- \pmb{\chi}\left(\gate (\mb{x}|\pmb{\theta}_{gate})\right)\right) \mgn(\mb{x}|\pmb{\theta}_{mgn}).
		\end{align*}
		The training process involves pre-training $\icnn$ and $\mgn$ for a certain number of steps, followed by training $\gate$ on the outputs of the trained experts for the same number of epochs. We consider the mean-square error loss for the $\tth{i}$ data sample
		\begin{align}
			l^{(i)} \ldef 
			\left\Vert [\nn(\pmb{\gamma}^{(i)},\pmb{\xi}^{(i)})]_{(
				\mcal{I}_{\tilde{\pmb{\lambda}}},\mcal{I}_{\tilde{\pmb{\mu}}})}- \left(-\tilde{\pmb{\lambda}}^{(i)*}, -\tilde{\pmb{\mu}}^{(i)*}\right) 
			\right\Vert_2^2,
		\end{align}
		where $(
		\mcal{I}_{\tilde{\pmb{\lambda}}},\mcal{I}_{\tilde{\pmb{\mu}}})$ are indices corresponding to $\tilde{\pmb{\lambda}}^*$ and $\tilde{\pmb{\mu}}^*$.
		During the training of $\gate$, a weighing factor of $100N$ is used for loss indices that are ground truth binding, thereby allowing better learning of sparse binding constraints. The $\gate$ network acts as a polling mechanism which soft-selects the response of the more accurate expert as the final response.} \blue{We conclude this section by highlighting the necessity of using the mixture of two different models.
		
		\begin{remark}[Necessity of Using a Mixture-of-Experts Model]
            \label{rem:necessity-moge}
			As will be seen in Algorithm~\ref{alg:accelerate} and Proposition~\ref{prop:two-iters}, in order to achieve a C-OPF solution which is identical to the full problem, we must solve screened instances of the C-OPF at most $p+1$ times, where $p$ denotes the number of false negatives, i.e. ground truth binding constraints which are classified as non-binding by the given CS method. In order to reduce solve time, p must be 0 on average, which necessitates that false negative rates must be brought close to zero. \icnn, while being competetive in terms of correctly classifying constraints, still suffers from unacceptably high rates of false negatives as seen in Section~\ref{sec:4}. On the other hand \mgn\ tends to generally classify a large number of constraints as binding by the virtue of it being less performant due to its shallow architecture. This behavior of \mgn\ can be used to counteract \icnn's false negatives by combining the two using \gate, which is a strategy that has been previously used to extract enhanced performance out of a combination of biased experts~\cite{AA-YA-2020}.
	\end{remark}}

	\subsection{Dataset Generation and Training}
	
	\begin{algorithm}[t!]
		\caption{Dataset Generation and Training $\nn$}
		\label{alg:dataset}
		\textbf{Input:} Upper bound $(\bar{\pmb{\gamma}},\bar{\pmb{\xi}})$ and lower bound $(\underline{\pmb{\gamma}},\underline{\pmb{\xi}})$, number of initial samples $K_1$, number of convex hull samples $K_2$\\
		\textbf{Output:}  Trained $\nn$
		\begin{algorithmic}[1]
			\State $\mcal{D} = \{\},\mcal{D}_1 = \{\},\mcal{D}_2=\{\}$ 
			\For{$i = 1$ to $K_1$} \Comment{\texttt{Generate 1st dataset}}
			\State Sample $(\pmb{\gamma},\pmb{\xi}) \sim \mathrm{Unif} ((\underline{\pmb{\gamma}},\underline{\pmb{\xi}}),(\bar{\pmb{\gamma}},\bar{\pmb{\xi}}))$
			\State Solve~\eqref{prob:orig} with parameters $(\pmb{\gamma},\pmb{\xi})$
			\If {\eqref{prob:orig} is infesible for $(\pmb{\gamma},\pmb{\xi})$}, continue
			\Else
			\State Record inputs \& outputs $\mb{d} \ldef (\pmb{\gamma},\pmb{\xi},\tilde{\pmb{\lambda}}^*,\tilde{\pmb{\mu}}^*)$
			\State $\mcal{D}_1 = \mcal{D}_1 \cup \{\mb{d}\}$
			\EndIf
			\EndFor
			\For {$i=1$ to $K_2$} \Comment{\texttt{Generate 2nd dataset}}
			\State Randomly generate a point $(\pmb{\gamma},\pmb{\xi}) \in \mathrm{conv}\{(\pmb{\gamma}^{(1)},\pmb{\xi}^{(1)}),\dots, (\pmb{\gamma}^{(|\mcal{D}_1|)},\pmb{\xi}^{(|\mcal{D}_1|)})\}$
			\State Solve~\eqref{prob:orig} with the parameters $(\pmb{\gamma},\pmb{\xi})$
			\State Record inputs \& outputs $\mb{d} \ldef (\pmb{\gamma},\pmb{\xi},\tilde{\pmb{\lambda}}^*,\tilde{\pmb{\mu}}^*)$
			\State $\mcal{D}_2 = \mcal{D}_2 \cup \{\mb{d}\}$
			\EndFor
			\State $\mcal{D} = \mcal{D}_1 \cup \mcal{D}_2$ 
			\State Train $\nn$ using minibatch training on losses $\{l^{(i)}\}_{i\in[|\mcal{D}|]}$
			\State \textbf{return} $\nn$
		\end{algorithmic}
	\end{algorithm}
	
	\begin{algorithm}[t!]
		\caption{Constraint Screening based Accelerated C-OPF}
		\label{alg:accelerate}
		\textbf{Input:} Trained $\nn$, feasible parameters $(\pmb{\gamma},\pmb{\xi})$\\
		\textbf{Output:}  Solution $\mb{x}^*$ to C-OPF~\eqref{prob:orig}
		\begin{algorithmic}[1]
			\State $\tilde{\pmb{\lambda}}^* = [\nn(\pmb{\gamma},\pmb{\xi})]_{\mcal{I}_{\tilde{\pmb{\lambda}}}}$
			\Comment{\texttt{Predict optimal dual vars.}}
			\State $\mcal{A}_\text{bind} = \{ i: \tilde{\pmb{\lambda}}^*_i \neq 0 \}$ \Comment{\texttt{Set of binding constr.}} \label{line:proclaim}
			\State $\mcal{A}_\text{viol} = \varnothing$ \Comment{\texttt{Set of violated constr.}}
			\Do
			\State Replace $\tilde{\mb{g}}(\mb{x})\mleq \pmb{\gamma}$ in~\eqref{prob:orig} with $[\tilde{\mb{g}}(\mb{x})]_{\mcal{A}_\text{bind}} \mleq [\pmb{\gamma}]_{\mcal{A}_\text{bind}}$
			\State Solve the reduced version of~\eqref{prob:orig}		
			\State Save the optimal solution $\mb{x}^*$ 
			\State Set $\mcal{A}_\text{viol} = \{i: \tilde{\mb{g}}_i(\mb{x}^*) > \pmb{\gamma}_i \}$ 
			\State Update $\mcal{A}_\text{bind} = \mcal{A}_\text{bind} \cup \mcal{A}_\text{viol}$ 
			\doWhile{$\mcal{A}_\text{viol} \neq \varnothing$}
			\State \textbf{return} $\mb{x}^*$
		\end{algorithmic}
	\end{algorithm}
	
	The dataset generation and $\nn$ training process are highlighted in Algorithm~\ref{alg:dataset}. As seen in Table~\eqref{tab:convert_to_param}, the parameters $(\pmb{\gamma},\pmb{\xi})$ contain values such as real and reactive load demands and thermal limits. To enhance the robustness of CS to newly encountered values of such parameters, we first create dataset $\mcal{D}_1$ by uniformly sampling $\pmb{\gamma}$ and $\pmb{\xi}$ over given intervals. Each dimension of the parameter space is sampled independently. Then, problem~\eqref{prob:orig} is solved with each of the sampled parameters. The resulting parameters $(\pmb{\gamma},\pmb{\xi})$ and optimal dual variables $\tilde{\pmb{\lambda}}^{*}, \tilde{\pmb{\mu}}^{*}$ are all added to $\mcal{D}_1$. If any parameter causes an infeasible problem, it is ignored. Three points are noteworthy. First, the sampling process can be replaced with the use of historic datasets from solving prior C-OPF problems. Second, employing the aforementioned sampling approach for generating the entire dataset could lead to numerous sampled parameters being infeasible, causing a significant waste of time. Therefore, we sample a distribution derived from the existing data to bootstrap the generation of additional feasible parameters (i.e. the set $\mcal{D}_2$ in Algorithm~\ref{alg:dataset}). \blue{Third, the interior of the convex hull of $\mcal{D}_1$ forms the open set $\Gamma \times \Xi$ discussed in theoretical analyses.}

	
	\subsection{Accelerated C-OPF with CS} Following the training of $\nn$, we use it to accelerate C-OPF as detailed in Algorithm~\ref{alg:accelerate}. Given a new instance of $(\pmb{\gamma},\pmb{\xi})$, $\nn$ predicts the value of  $\tilde{\pmb{\lambda}}^*$. 
	Based on Assumption~\ref{assmp:strict_comp}, 
	non-zero elements of $\tilde{\pmb{\lambda}}^*$ indicate the corresponding constraints must be binding. False positive mispredictions (classifying a non-binding constraint as binding) merely increase the reduced problem size without affecting the optimal solution. However, false negative mispredictions (classifying a binding constraint as non-binding) can alter the optimal solution. In such cases, Algorithm~\ref{alg:accelerate} adds the violated constraints to the list of binding constraints and re-solves C-OPF. The following result {characterizes the number of re-solves as a function of model error}.
	
	\begin{prop}
		\label{prop:two-iters}
		The do-while loop in Algorithm~\ref{alg:accelerate} terminates after at most $p+1$ iterations, where $p$ is the number of false negative constraints (constraints which bind but are classified by \nn\ as non-binding in line~\ref{line:proclaim} of Algorithm~\ref{alg:accelerate}).
	\end{prop}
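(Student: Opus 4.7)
The plan is to establish the iteration bound via a monotone-progress argument: each non-terminating pass of the do-while loop must augment $\mcal{A}_\text{bind}$ with at least one previously-missed true binding constraint. Let $\mcal{A}^* \ldef \mcal{A}_{\tilde{\mb{g}}}(\mb{x}^*)$ denote the true active set at the (assumed unique) optimum $\mb{x}^*$ of the full C-OPF, and set $\mcal{F} \ldef \mcal{A}^* \setminus \mcal{A}_\text{bind}^{(0)}$, so $|\mcal{F}| = p$. Denote by $\mcal{A}_\text{bind}^{(k)}$ the binding set carried into iteration $k+1$, and by $\mb{x}^{(k)}$ the reduced-problem minimizer at iteration $k$. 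The easy half of the argument invokes Proposition~\ref{lem:convex_exclusion} together with Assumption~\ref{assmp:strict_comp}: whenever $\mcal{A}^* \subseteq \mcal{A}_\text{bind}^{(k-1)}$, the reduced problem is equivalent to the full C-OPF, so $\mb{x}^{(k)} = \mb{x}^*$ satisfies every original constraint, giving $\mcal{A}_\text{viol}^{(k)} = \varnothing$ and terminating the loop.

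The central step is to show that whenever iteration $k$ does not terminate, $\mcal{A}_\text{viol}^{(k)}$ contains at least one index from $\mcal{A}^* \setminus \mcal{A}_\text{bind}^{(k-1)} \subseteq \mcal{F}$. I would argue by contradiction: suppose every violated constraint at $\mb{x}^{(k)}$ lies outside $\mcal{A}^*$, so $\mb{x}^{(k)}$ satisfies every $\tilde{\mb{g}}_j$ indexed by $\mcal{A}^*$. Form $\mb{x}_t \ldef (1-t)\mb{x}^{(k)} + t\mb{x}^*$; by convexity of $\mb{g}$, affinity of $\mb{h}$ and $\tilde{\mb{h}}$, and preservation of the box constraints under convex combinations, $\mb{x}_t$ satisfies every constraint other than $\tilde{\mb{g}}$. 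Convexity of each $\tilde{\mb{g}}_j$ keeps the $\mcal{A}^*$-indexed inequalities satisfied along the whole segment, while continuity and strict slack at $\mb{x}^*$ keep the remaining $\tilde{\mb{g}}$-inequalities satisfied for $t$ sufficiently close to $1$. Hence $\mb{x}_t$ is feasible for the full problem for such $t$. Because $\mb{x}^{(k)}$ minimizes a relaxation containing $\mb{x}^*$, $f(\mb{x}^{(k)}) \leq f(\mb{x}^*)$; combining this with optimality of $\mb{x}^*$ over $\mb{x}_t$ and convexity of $f$ forces $f(\mb{x}^{(k)}) = f(\mb{x}^*)$, and uniqueness of the primal optimum then yields $\mb{x}^{(k)} = \mb{x}^*$, contradicting the infeasibility of $\mb{x}^{(k)}$.

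Chaining the two facts, $|\mcal{A}_\text{bind}^{(k)} \cap \mcal{A}^*|$ strictly increases at every non-terminating iteration by at least one element of $\mcal{F}$, so after at most $p$ non-terminating iterations $\mcal{A}^* \subseteq \mcal{A}_\text{bind}^{(k)}$, and iteration $p+1$ terminates, giving the bound. The main obstacle I anticipate is rigorously justifying primal uniqueness, since the QC-OPF objective is strictly convex only in $\{P^g_k\}$; I would address this by coupling dual uniqueness (from LICQ in Theorem~\ref{thm:LICQ}) with strict complementarity (Assumption~\ref{assmp:strict_comp}), which fixes $\mcal{A}^*$ via complementary slackness and suffices to close the contradiction even when auxiliary variables admit multiple optima—since such redundancy does not alter which $\tilde{\mb{g}}$-constraints bind and hence does not change the violated-set update in Algorithm~\ref{alg:accelerate}.
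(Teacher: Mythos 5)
Your overall skeleton is the same as the paper's: show that every non-terminating pass of the loop must flag at least one of the remaining false negatives, so that at most $p$ passes add constraints and pass $p+1$ terminates. The segment construction $\mb{x}_t=(1-t)\mb{x}^{(k)}+t\mb{x}^*$ is a legitimate alternative to the paper's value-function lemma (Lemma~\ref{claim:onecons}), and it works cleanly whenever $f(\mb{x}^{(k)})<f(\mb{x}^*)$: feasibility of $\mb{x}_t$ for $t$ near $1$ together with $f(\mb{x}_t)\leq(1-t)f(\mb{x}^{(k)})+tf(\mb{x}^*)<f(\mb{x}^*)$ already contradicts optimality of $\mb{x}^*$, with no appeal to uniqueness of anything.

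The genuine gap is your treatment of the equality case $f(\mb{x}^{(k)})=f(\mb{x}^*)$. There you invoke primal uniqueness, which the paper never establishes and which genuinely fails for QC-OPF (the objective is strictly convex at best in $\{P^g_k\}$). Your proposed repair --- dual uniqueness plus strict complementarity ``fixing'' the active set --- does not close the hole: complementary slackness for the full problem constrains the active set of \emph{optimal feasible points} of the full problem, whereas $\mb{x}^{(k)}$ is by hypothesis infeasible for the full problem, so nothing about its binding pattern follows from the full problem's KKT system. What actually rules out the equality case is the one ingredient your central step never uses: the remaining false negatives have strictly positive multipliers (Assumption~\ref{assmp:strict_comp}), and by Proposition~\ref{prop:grad} these equal $-\nabla_{\pmb{\gamma}}\mcal{V}$, so enlarging the feasible set in exactly those coordinates of $\pmb{\gamma}$ strictly decreases the optimal value; since the reduced feasible set contains every such perturbed feasible set, $f(\mb{x}^{(k)})<f(\mb{x}^*)$ holds strictly whenever a false negative remains unrestored. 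This is precisely the content of the paper's Lemma~\ref{claim:onecons}. Inserting that strict inequality makes your segment argument complete and lets you delete the uniqueness discussion entirely; without it, the proof as written does not go through.
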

	
	{Proposition~\ref{prop:two-iters} outlines the worst-case scenario and is proved in~\ref{app:maxloops}. However, in practice, as shown in Section~\ref{sec:4}, our experiments consistently detect any violated constraints, if present, during the very first iteration of the do-while loop. Furthermore, re-solve is not even necessary for the majority of instances.} These facts together ensure that the resulting C-OPF solve time can be significantly reduced. We also note that Algorithm~\ref{alg:accelerate} is compatible with CS methods beyond $\nn$.	
	\begin{remark}[Generality of Algorithm~\ref{alg:accelerate}]
		\label{rem:generality}
		In Algorithm~\ref{alg:accelerate}, the sole role of $\nn$ is to generate the optimal dual variable $\tilde{\pmb{\lambda}}^*$ for identifying binding constraints. Therefore, Algorithm~\ref{alg:accelerate} can be used with any alternative method capable of identifying binding constraints as a function of $(\pmb{\gamma},\pmb{\xi})$. Furthermore, Prop.~\ref{prop:two-iters} continues to hold for CS methods other than $\nn$, since it is only based on the properties of the C-OPF (\emph{viz.} convexity, Prop.~\ref{prop:grad} and Assumption~\ref{assmp:strict_comp}).
	\end{remark}

    \edit{
    We conclude this section with a result on the generalization capability of \moge\ which extends~\cite[Theorem 6.4]{LC-YZ-BZ-2022} to the \moge\ architecture.

    \begin{thm}[Generalization Error of \moge]
    \label{thm:moge-gen}
        Let $\mcal{A} \ldef \{\allowbreak(\pmb{\gamma}^{(1)},\pmb{\xi}^{(1)}),\allowbreak\cdots,\allowbreak(\pmb{\gamma}^{(K)},\pmb{\xi}^{(K)})\}\allowbreak \subseteq\allowbreak \Gamma \times \Xi$ and $\nabla_{\pmb{\gamma},\pmb{\xi}} \mcal{V}(\pmb{\gamma},\pmb{\xi}) = (-\tilde{\pmb{\lambda}}^*,-\tilde{\pmb{\mu}}^*)$ for all $ (\pmb{\gamma},\pmb{\xi})\in\mcal{A}$. Further, let $\mcal{G}(\pmb{\lambda},\pmb{\xi})$ and $\mcal{H}(\pmb{\lambda},\pmb{\xi})$, both defined on $\Gamma \times \Xi$, be parameterized monotone functions which are convex gradients\footnote{It is possible for a function $F:\real{n}\mapsto \real{n}$ to be monotone, yet for there to not exist a convex function $\phi$ such that $F\equiv \nabla \phi$ over $\text{dom}(\phi)$. \icnn\ and \mgn\ avoid this problem: they are gradients of a convex field by construction.}. Lastly, let $\mcal{T}(\pmb{\lambda},\pmb{\xi}) \in [0,1]$ over $\Gamma \times \Xi$ and let $\mcal{Z} \ldef \mcal{TG}+(1-\mcal{T})\mcal{H}$ (arguments omitted). If $\mcal{G}$ and $\mcal{H}$ agree with $\nabla \mcal{V}$ on all points in $\mcal{A}$, then for all points $(\pmb{\gamma},\pmb{\xi})\in\text{conv}(\mcal{A})$,
        \begin{align*}
            \nabla_{\pmb{\gamma},\pmb{\xi}} \mcal{V}(\pmb{\gamma},\pmb{\xi}) = \mcal{G}(\pmb{\gamma},\pmb{\xi}) = \mcal{H}(\pmb{\gamma},\pmb{\xi}) = \mcal{Z}(\pmb{\gamma},\pmb{\xi}) = (-\tilde{\pmb{\lambda}}^*,-\tilde{\pmb{\mu}}^*).
        \end{align*}
    \end{thm}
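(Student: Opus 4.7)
The plan is to reduce the statement to the $K=1$ (per-expert) case already handled by the cited \cite[Theorem 6.4]{LC-YZ-BZ-2022}, and then exploit the fact that once both experts individually coincide with $\nabla \mcal{V}$ on $\mathrm{conv}(\mcal{A})$, any convex combination of them collapses trivially to $\nabla \mcal{V}$ as well. Concretely, by the footnote, there exist convex scalar potentials $\Phi_{\mcal{G}}$ and $\Phi_{\mcal{H}}$ on $\Gamma\times\Xi$ such that $\mcal{G}=\nabla \Phi_{\mcal{G}}$ and $\mcal{H}=\nabla \Phi_{\mcal{H}}$; moreover $\mcal{V}$ is convex by Lemma~\ref{lem:conv}. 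Thus the problem is really about three convex scalar fields whose gradients are forced to agree at the $K$ sample points of $\mcal{A}$, and we must propagate this agreement to the interior of their convex hull.

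For the per-expert step, I would mimic the proof strategy of the cited Theorem~6.4 applied independently to the pair $(\mcal{V},\Phi_{\mcal{G}})$ and then to $(\mcal{V},\Phi_{\mcal{H}})$. Fix $p=\sum_{i=1}^K \alpha_i (\pmb{\gamma}^{(i)},\pmb{\xi}^{(i)})\in\mathrm{conv}(\mcal{A})$ with $\pmb{\alpha}\succeq \mb{0}$ and $\mb{1}^\top\pmb{\alpha}=1$, and let $g^{(i)}\ldef \nabla \mcal{V}(\pmb{\gamma}^{(i)},\pmb{\xi}^{(i)})=\mcal{G}(\pmb{\gamma}^{(i)},\pmb{\xi}^{(i)})=\mcal{H}(\pmb{\gamma}^{(i)},\pmb{\xi}^{(i)})=(-\tilde{\pmb{\lambda}}^{*(i)},-\tilde{\pmb{\mu}}^{*(i)})$. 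Writing the two first-order inequalities for $\mcal{V}$ between $p$ and each $q^{(i)}\ldef(\pmb{\gamma}^{(i)},\pmb{\xi}^{(i)})$, taking the $\pmb{\alpha}$-weighted sum, and combining with Jensen's inequality sandwiches $\mcal{V}(p)$ between $\sum_i \alpha_i \mcal{V}(q^{(i)})$ and $\sum_i \alpha_i \mcal{V}(q^{(i)})+\sum_i \alpha_i g^{(i)\top}(p-q^{(i)})$. Repeating the identical chain for $\Phi_{\mcal{G}}$ with the same vertex gradients $g^{(i)}$ yields the same sandwich for $\Phi_{\mcal{G}}(p)$. Since $\mcal{G}$ is monotone as a convex gradient, the resulting inequalities pin the interior gradient $\mcal{G}(p)$ to the value $\nabla \mcal{V}(p)$, giving $\mcal{G}(p)=\nabla \mcal{V}(p)$ on $\mathrm{conv}(\mcal{A})$. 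An identical argument with $\Phi_{\mcal{H}}$ in place of $\Phi_{\mcal{G}}$ gives $\mcal{H}(p)=\nabla \mcal{V}(p)$.

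With $\mcal{G}(p)=\mcal{H}(p)=\nabla \mcal{V}(p)=(-\tilde{\pmb{\lambda}}^*,-\tilde{\pmb{\mu}}^*)$ on $\mathrm{conv}(\mcal{A})$, the mixture follows with no additional work: for any $\mcal{T}(p)\in[0,1]$,
\begin{align*}
\mcal{Z}(p) &= \mcal{T}(p)\mcal{G}(p)+\bigl(1-\mcal{T}(p)\bigr)\mcal{H}(p) \\
&= \mcal{T}(p)\nabla \mcal{V}(p)+\bigl(1-\mcal{T}(p)\bigr)\nabla \mcal{V}(p) = \nabla \mcal{V}(p),
\end{align*}
so the gate $\gate$ is irrelevant inside $\mathrm{conv}(\mcal{A})$. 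The main obstacle I expect is the per-expert extension step: making rigorous the claim that coincidence of two monotone convex gradients at $K$ sample points forces coincidence on the entire convex hull. Pure monotonicity is not enough to lift pointwise agreement to the simplex, so the argument has to use both the convexity of the potentials \emph{and} the simultaneous Jensen/first-order chains for the two fields; this is precisely where the structural assumption that $\mcal{G}$ and $\mcal{H}$ are genuine convex gradients (rather than arbitrary monotone maps, per the footnote) has to be invoked to close the inequality chain to equality.
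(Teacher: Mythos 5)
Your overall architecture --- pass to convex potentials $\Phi_{\mcal{G}},\Phi_{\mcal{H}}$, sandwich each potential between its $\pmb{\alpha}$-weighted first-order minorant and Jensen's inequality, and then observe that the mixture step for $\mcal{Z}$ is trivial once both experts agree with $\nabla\mcal{V}$ on $\mathrm{conv}(\mcal{A})$ --- is the same as the paper's. However, there is a genuine gap at exactly the point you flag as "the main obstacle," and it stems from a misreading of the hypothesis. You introduce vertex gradients $g^{(i)}=(-\tilde{\pmb{\lambda}}^{*(i)},-\tilde{\pmb{\mu}}^{*(i)})$ that may differ across $i$, and then try to prove that agreement of two convex gradients at the $K$ sample points forces agreement on the whole convex hull. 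That statement is false in general: in one dimension, $\mcal{V}(x)=x^2$ and $\Phi(x)=x^4/2$ have equal derivatives at $x=\pm 1$ ($\mp 2$ and $\pm 2$ respectively at the two endpoints) but $\mcal{V}'(1/2)=1\neq \Phi'(1/2)=1/4$, so no amount of monotonicity or Jensen bookkeeping can close your sandwich when the $g^{(i)}$ are distinct --- the lower and upper bounds simply do not meet, and matching \emph{values} of two convex functions at a point does not force matching \emph{gradients} there.

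The theorem as stated assumes $\nabla_{\pmb{\gamma},\pmb{\xi}}\mcal{V}$ equals the \emph{same} vector $(-\tilde{\pmb{\lambda}}^*,-\tilde{\pmb{\mu}}^*)$ at every point of $\mcal{A}$ (note the absence of a superscript $(i)$ on the dual variables; Remark~\ref{cor:moge-perf} makes this explicit: "$\nabla\mcal{V}$ is constant over said subset"). That constancy is what makes the argument close: with a common gradient $\mb{g}$ the correction term $\sum_i\pmb{\alpha}_i\,\mb{g}^\top(p-q^{(i)})$ telescopes to zero because $p=\sum_i\pmb{\alpha}_iq^{(i)}$, so the weighted first-order bound and Jensen coincide, each potential $\mcal{V},\Phi_{\mcal{G}},\Phi_{\mcal{H}}$ is affine on $\mathrm{conv}(\mcal{A})$ and coincides there with the supporting hyperplane of slope $\mb{g}$, and hence each gradient equals $\mb{g}$ throughout the hull. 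This is precisely the paper's proof. Your final paragraph on $\mcal{Z}$ is correct and matches the paper; to repair the proposal you only need to replace the $g^{(i)}$ by the single common $\mb{g}$ and carry out the telescoping explicitly, rather than appealing to the (false) general lifting claim.
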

    The proof follows from standard properties of convex functions and is provided in~\ref{app:moge-gen-proof}. 
    \begin{remark}[\moge\ Performance Improves with Dense Sampling]
    \label{cor:moge-perf}
        Theorem~\ref{thm:moge-gen} colloquially states that if \moge\ is trained to zero error over the training dataset $\mcal{D}$, then it achieves zero test error over the convex hull of any subset of $\mcal{D}$ such that $\nabla \mcal{V}$ is constant over said subset. This observation motivates the idea that new samples from regions of $\Gamma \times \Xi$ with good sample coverage in the training dataset $\mcal{D}$ enjoy more accurate outputs. This is because for cases where $\mcal{A}$ belongs to a region of $\Gamma \times \Xi$ over which $\mcal{V}$ is smooth and the points in $\mcal{A}$ are clustered close together, the outputs of $\nabla \mcal{V}$ are approximately equal for all points in $\mcal{A}$.
     \end{remark}
    }
	
	\section{Simulations}
	\label{sec:4}
	\newcommand{\minp}[1]{
		\begin{minipage}[b]{0.2\textwidth}
			\vfill
			{#1}
			\vfill
		\end{minipage}
	}
	\blue{
		\begin{table}[tb!]
			\centering
			\caption{Number of elements in different cases from the PGLIB and MATPOWER libraries, and solve times for the full problem.}
			\resizebox{\linewidth}{!}{
				\begin{tabular}{|l|c|c|c|c|c|}
					\hline
					\textbf{Cases} & \textbf{Buses} & \textbf{Lines} & \textbf{Generators} & \textbf{Transformers} & \makecell{\textbf{Solve Time}\\\textbf{(Full Problem)}}\\
                    \hline
                    \multicolumn{6}{|c|}{PGLIB - Meshed}\\
					\hline \texttt{case118} & 118 & 186 & 54 & 11 & 0.13s\\
					\texttt{case1354} & 1354 & 1991 & 260 & 240 & 4.81s\\
					\texttt{case2312} & 2312 & 3013 & 226 & 857 & 8.21s\\
					\texttt{case4601} & 4601 & 7199 & 408 & 2054 & 21.56s\\
					\texttt{case10000} & 10000 & 13139 & 2016 & 2374 & 49.52s\\
					\hline
                    \multicolumn{6}{|c|}{MATPOWER - Radial} \\
                    \hline
                    \texttt{case136} & 136 & 135 & 1 & 0 & 2.74s\\
                    \texttt{case1197} & 1197 & 1196 & 1 & 0 & 6.22s\\
                    \hline
				\end{tabular}
			} 
			\label{tab:testcases}
		\end{table}
	}	
	\begin{figure*}[tb!]
		\centering
		\includegraphics[width=0.93\linewidth]{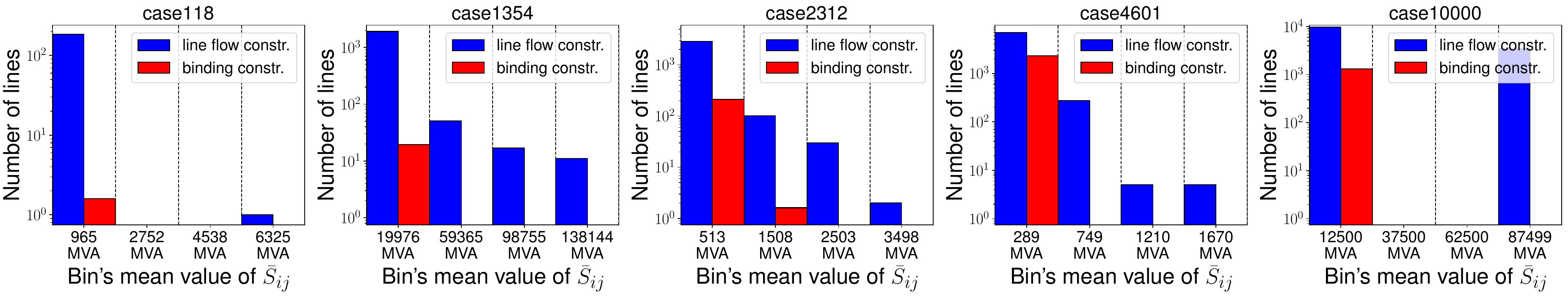}
		\caption{Histograms of thermal limits, along with the average number (across the test set and forward/reverse flows) of corresponding line flow constraints which bind for different cases in PGLIB. \blue{These constraints, which constitute $\tilde{\mb{g}}(\mb{x})\mleq\pmb{\gamma}$, are candidates for removal via presolve.}}
		\label{fig:binding}
	\end{figure*}
	{ 
		\begin{table*}[h!]
			\centering
			\caption{Performance evaluation of constraint classification approaches based on average solve time, training time, and fraction of re-solves for 100 test samples for QC-OPF. A negative reduction in solve time indicates an increase in solve time over the original problem.}
			\resizebox{\linewidth}{!}{
\renewcommand{\b}[1]{\textbf{#1}}
\begin{tabular}{|l|c|c|c|c|c|c|c|}
	\hline
	\textbf{\makecell{Method of\\Constr. Removal}} & \makecell{\textbf{Training}\\\textbf{Time}} & \multicolumn{4}{c|}{\textbf{\makecell{Confusion Matrix\\ (Full Test Set)}	}} & \textbf{\makecell{Percent Reduction\\in Solve Time\\(100 samples)}} & \textbf{\makecell{Fraction\\Resolved\\(100 samples)}}\\
	\hline
	& &\makecell{\textbf{True}\\\textbf{Positive}} & \makecell{\textbf{True}\\\textbf{Negative}} & \makecell{\textbf{False}\\\textbf{Positive}} & \makecell{\textbf{False}\\\textbf{Negative}} & & \\
	\hline
	\multicolumn{8}{|c|}{\texttt{case118}}\\
	\hline
	Original problem & - & 0.86\% & 99.14\% & - & - & 0.00\% & -\\
	\moge\  & 42.9275s & \b{0.86\%} & 94.35\% & 4.79\% & \b{0.00\%}  & 22.01\% & \b{0.00}\\
	\icnn\  & 17.7126s & 0.83\% & 95.68\% & 3.47\% & 0.02\%  & \b{22.16\%} & \b{0.00}\\
	\mgn\  & 2.1961s & 0.83\% & 4.88\% & 94.26\% & 0.02\% & -32.23\% & \b{0.00}\\
	Deep classifier & 3.7815s & 0.51\% & \b{99.12\%} & \b{0.02\%} & 0.34\% & -27.61\% & 0.69\\
	Ridge regression & 3.3604s & 0.52\% & 99.10\% & 0.04\% & 0.33 & -32.46\% & 0.63\\
	XGBoost & \b{1.5327s} & 0.51\% & 99.12\% & 0.03\% & 0.34\%  & -38.39\% & 0.70\\
	\hline
	\multicolumn{8}{|c|}{\texttt{case1354}}\\
	\hline
	Original problem & - & 0.97\% & 99.03\% & - & - & 0.00\% & -\\
	\moge\  & 28.1495s & \b{0.97\%} & 97.24\% & 1.79\% & \b{4e-5\%} & \b{33.89\%} & \b{0.00}\\
	\icnn\  & 12.8741s & 0.97\% & 97.64\% & 1.39\% & 6e-4\% & 28.46\% & 0.09\\
	\mgn\  & 11.4738s & 0.95\% & 3.24\% & 95.79\% & 0.02\% & -2.62\% & 0.10\\
	Deep classifier & 7.5701s & 0.92\% & \b{98.99\%} & \b{0.04\%} & 0.05\% & -3.57\% & 0.60\\
	Ridge regression & \b{6.4307s} & 0.92\% & 98.99\% & 0.04\% & 0.05\% & -3.40\% & 0.60\\
	XGBoost & 33.3336s & 0.90\% & 99.01\% & 0.02\% & 0.07\% & -26.15\% & 0.97\\
	\hline
	\multicolumn{8}{|c|}{\texttt{case2312}}\\
	\hline
	Original problem & - & 7.14\% & 92.86\% & - & - & 0.00\% & -\\
	\moge\  & 49.6899s & \b{7.13\%} & 87.17\% & 5.69\% & \b{3e-4\%} & \b{25.38\%} & \b{0.00}\\
	\icnn\  & 16.6255s & 7.07\% & 89.91\% & 2.96\% & 0.07\% & 22.28\% & 0.06\\
	\mgn\  & 25.6937s & 7.12\% & 0.45\% & 92.42\% & 0.02\% & 14.99\% & \b{0.00}\\
	Deep classifier & \b{11.0422s} & 6.92\% & 92.62\% & 0.25\% & 0.21\% & -44.77\% & 1.00\\
	Ridge regression& 12.9950s & 7.04\% & 91.57\% & 1.30\% & 0.09\% & -39.94\% & 0.92\\
	XGBoost & 90.4948s & 6.85\% & \b{92.67\%} & \b{0.19\%} & 0.29\% & -45.04\% &1.00\\
	\hline
	\multicolumn{8}{|c|}{\texttt{case4601}}\\
	\hline
	Original problem & - & 31.93\% & 68.07\% & - & - & 0.00\% & -\\
	\moge\  & 171.3697s & \b{31.93\%} & 58.49\% & 9.57\% & \b{0.00\%} & \b{ 20.15\%} & \b{0.00}\\
	\icnn\  & 26.6654s & 31.76\% & 59.28\% & 8.79\% & 0.17\% & 19.72\% & \b{0.00}\\
	\mgn\  & 101.7099s & 31.87\% & 3.22\% & 64.85\% & 0.06\% & 18.02\% & \b{0.00}\\
	Deep classifier & \b{21.4779s} & 31.91\% & 59.24\% & 8.83\% & 0.02\% & 18.99\% &0.03\\
	Ridge regression & 40.9736s & 31.91\% & 59.24\% & 8.83\% & 0.02\% & 19.22\% &0.03\\
	XGBoost & 569.9688s & 31.76\% & \b{59.32\%} & \b{8.75\%} & 0.17\% & 18.99\% & 0.03\\
	\hline
	\multicolumn{8}{|c|}{\texttt{case10000}}\\
	\hline
	Original problem & - & 9.94\% & 90.06\% & - & - & 0.00\% & -\\
	\moge\  & 580.2680s & \b{9.94\%} & 86.99\% & 3.07\% & \b{0.00\%} & \b{24.53\%} & \b{0.00}\\
	\icnn\  & \b{48.1584s} & 8.87\% & 87.61\% & 2.44\% & 1.07\% & 20.21\%  & 0.02\\
	\mgn\  & 363.8684s & 9.94\% & 26.54\% & 63.52\% & 0.00\% & 8.76\% & \b{0.00}\\
	Deep classifier & 72.0453s & 9.93\% & \b{87.31\%} & \b{2.75\%} & 0.01\% & 8.78\% & 0.17\\
	Ridge regression & 197.7062s & 9.93\% & 87.31\% & 2.75\% & 0.01\% & 8.76\% &0.17\\
	XGBoost & -OOT- &- & - & - & - & - & -\\\hline
\end{tabular}
			}
			\label{tab:presolve_performance}
		\end{table*}

        \begin{table*}[h!]
			\centering
			\caption{Performance evaluation of constraint classification approaches based on average solve time, training time, and fraction of re-solves for 100 test samples for CDF-OPF.}
			\resizebox{\linewidth}{!}{
\renewcommand{\b}[1]{\textbf{#1}}
\begin{tabular}{|l|c|c|c|c|c|c|c|}
	\hline
	\textbf{\makecell{Method of\\Constr. Removal}} & \makecell{\textbf{Training}\\\textbf{Time}} & \multicolumn{4}{c|}{\textbf{\makecell{Confusion Matrix\\ (Full Test Set)}	}} & \textbf{\makecell{Percent Reduction\\in Solve Time\\(100 samples)}} & \textbf{\makecell{Fraction\\Resolved\\(100 samples)}}\\
	\hline
	& &\makecell{\textbf{True}\\\textbf{Positive}} & \makecell{\textbf{True}\\\textbf{Negative}} & \makecell{\textbf{False}\\\textbf{Positive}} & \makecell{\textbf{False}\\\textbf{Negative}} & & \\
	\hline
	\multicolumn{8}{|c|}{\texttt{case136}}\\
	\hline
	Original problem & - & 86.53\% & 13.47\% & - & - & 0.00\% & -\\
	\moge\  & 17.3123s & \b{86.53\%} & 9.85\% & 3.62\% & \b{0.00\%}  & \b{9.87\%} & \b{0.00}\\
	\icnn\  & 9.2234s & 86.45\% & 10.72\% & 2.75\% & 0.08\%  & 9.12\% & 0.04\\
	\mgn\   & 3.1643s  & 86.50\% & 6.07\% & 7.40\% & 0.03\% & 7.22\% & 0.01\\
	Deep classifier & 4.2356s & 85.91\% & 13.36\% & 0.11\% & 0.62\% & -14.51\% & 0.86\\
	Ridge regression & 3.9744s & 85.88\% & 13.34\% & 0.13\% & 0.65\% & -13.95\% & 0.84\\
	XGBoost & \b{2.0030s} & 85.96\% & \b{13.37\%} & \b{0.10\%} & 0.57\%  & -17.80\% & 0.92\\
	\hline
	\multicolumn{8}{|c|}{\texttt{case1197}}\\
        \hline
	Original problem & - & 64.38\% & 35.62\% & - & - & 0.00\% & -\\
	\moge\  & 31.6432s & \b{64.38\%} & 33.02\% & 2.60\% & \b{0.00\%} & \b{26.41\%} & \b{0.00}\\
	\icnn\  & 13.9122s & 64.32\% & 33.52\% & 2.10\% & 0.06\% & 23.84\% & 0.05\\
	\mgn\   & 17.8436s & 64.34\% & 17.12\% & 18.50\% & 0.04\% & 12.36\% & 0.01\\
	Deep classifier & \b{8.5220s} & 64.08\% & 35.32\% & 0.30\% & 0.30\% & -4.22\% & 0.40\\
	Ridge regression & 9.9355s & 64.06\% & 35.31\% & 0.31\% & 0.32\% & -5.91\% & 0.37\\
	XGBoost & 41.7673s & 64.11\% & \b{35.39\%} & \b{0.23\%} & 0.27\% & -2.13\% & 0.72\\
	\hline
\end{tabular}

			}
			\label{tab:presolve_performance-cdf}
		\end{table*}
		\edit{We test the dataset generation Algorithm~\ref{alg:dataset} and the accelerated C-OPF in Algorithm~\ref{alg:accelerate}, focusing on the QC-OPF~\eqref{eq:qc-all} and CDF-OPF models~\eqref{eq:cdf-opf}. We utilize the PGLIB and MATPOWER libraries for our test cases, with the former containing meshed transmission networks and the latter containing radial distribution networks. We select seven specific test cases which are detailed in Table~\ref{tab:testcases}.} These test cases prescribe the grid topology, line and transformer parameters, nominal load demands, thermal limits, and PAD limits. 

        \edit{
        \begin{remark}[Dataset Generation]
        \label{rem:dset-gen}
        Variability in parameters $(\pmb{\gamma}, \pmb{\xi})$ arises from perturbations around the prescribed nominal values of $P^d_i$ and $Q^d_i$ at each bus. Specifically, if the nominal values of the aforementioned quantities are $\tilde{P}^d_i$ and $\tilde{Q}^d_i$, then they are sampled i.i.d elemtwise in the boxes $[0.25\tilde{P}^d_i,1.75\tilde{P}^d_i]$ and $[0.25\tilde{Q}^d_i,1.75\tilde{Q}^d_i]$ respectively. On the other hand, the nominal values of thermal limits $\bar{S}_{ij}$ are slightly perturbed per-instance in the range $[(1-\epsilon)\bar{S}_{ij},(1+\epsilon)\bar{S}_{ij}]$ to ensure that the set $\Gamma \times \Xi$ has a nonzero measure (note that $\bar{S}_{ij}$ is replaced with $\bar{S}_i$ for the CDF-OPF model). We want to highlight that $\Gamma \times \Xi$ having a nonzero measure (i.e. no dimensions of $(\pmb{\gamma},\pmb{\xi})$ being constant across the dataset) is important for ensuring strong duality guarantees in Theorem~\ref{thm:LICQ} hold.\par
        The aforementioned sampling principle is used to generate 1000 values of $(\pmb{\gamma},\pmb{\xi})$ to generate the dataset $\mcal{D}_1$, with infeasible instances discarded (see Algorithm~\ref{alg:dataset}). Convex hull sampling is then used to create the augmented dataset $\mcal{D}_2$ such that $|\mcal{D}|=|\mcal{D}_1\cup\mcal{D}_2|=5000$. The first 4000 points in $\mcal{D}$ are used for training, with the remaining points reserved for the test set. 
        \end{remark}}
		
		We use the Python implementation of the IPOPT solver~\cite{IPOPT}, \texttt{cyipopt}, to solve optimization problems. This solver is augmented with customized code to ensure efficient solutions out of the box. \texttt{cyipopt} provides optimal primal-dual solutions, with the dual solutions used for subsequent simulation aspects.
		
		\subsection{Speeding up QC-OPF in \texttt{cyipopt}}
		\label{sec:cyipopt}
		As a Python implementation of IPOPT, \texttt{cyipopt} lacks the speed advantages of Julia's JIT-compiled IPOPT implementations. Nevertheless, we can achieve comparable speed improvements in Python by pre-computing coefficients as outlined herein. All implementations of IPOPT solve the problem
		\begin{align*}
			\min\limits_{\mb{x}\in\real{n}} \quad& F(\mb{x})\\
			\text{s.t.} \quad& \underline{\mb{C}} \preceq \mb{C}(\mb{x}) \preceq \bar{\mb{C}},\quad\underline{\mb{x}} \preceq \mb{x} \preceq \bar{\mb{x}}
		\end{align*}
		wherein $\mb{C}:\real{n}\mapsto\real{p}$ are the lumped constraint functions, with equality constraints modeled by setting $\underline{\mb{C}}_i=\bar{\mb{C}}_i$. Given $\mb{x}\in\real{n}$, $\pmb{\alpha}\in\real{p}$, and $\beta\in\real{}$, the user is required to provide routines that numerically compute the quantities $F(\mb{x})$, $\mb{C}(\mb{x})$, $\nabla F(\mb{x})$, $\nabla \mb{C}(\mb{x})$, and $\mb{H}(\mb{x},\pmb{\alpha},\beta):=\beta\nabla^2 F(\mb{x}) + \sum_{j=1}^p \alpha_j\nabla^2\mb{C}_j(\mb{x})$. For QC-OPF, $\mb{H}$ is linear in $(\beta,\pmb{\alpha})$ and independent of $\mb{x}$ while all other quantities are either linear or quadratic in $\mb{x}$. \edit{The same also holds for CDF-OPF.} By pre-computing the coefficients of those terms, we achieve significant speedups since the computation of the above quantities reduces to sparse matrix-vector multiplications. 
		For example, our implementation solves PGLIB \texttt{case10000} in 50 seconds without any constraint removal, compared to 67 seconds for the Julia-based IPOPT package \emph{PowerModels.jl}~\cite{POWERMODELS}, despite lacking Julia's JIT features.

		Furthermore, quadratic thermal limits (for example,~\eqref{eq:qc-flowlims} for QC-OPF) add significant computation overhead due to their contribution of linear terms to $\nabla \mb{C}(\mb{x})$ and constant terms to $\sum_{j=1}^p \alpha_j \nabla^2 \mb{C}_j(\mb{x})$. However, most of these constraints are not binding at the optimum, as shown in Figure~\ref{fig:binding}. Therefore, reducing the solve time of any C-OPF involves accurately detecting and removing non-binding thermal limits, which are part of the constraints $\tilde{\mb{g}}(\mb{x}) \preceq \pmb{\gamma}$ in our model.

		\subsection{CS Classifiers}
		
		Our design of $\nn$ involves \icnn\ with $K=5$ and layer sizes $(2N+2E,N+E,300,150,2N+2E)$, and ELU ($\alpha=1$) activations. \mgn\ is chosen with $K=2$, $\sigma_k$ is chosen as ELU ($\alpha=1$) and Softplus, and $\mb{V}^\top\mb{V}$ is chosen to have rank 200 for all cases. \gate\ is a 2-layer network with a hidden layer of size 100, and LeakyReLU activation. The Adam optimizer is used to train $\nn$ for 2000 steps with a batch size of 64.
		Once $\nn$ has been trained via Algorithm~\ref{alg:dataset}, 100 randomly selected examples from the test set are fed in Algorithm~\ref{alg:accelerate} to observe the acceleration of C-OPF. We train $\nn$ via PyTorch, which is carried out on a system with an Intel Core i9 processor, 64 GB of RAM, and two NVIDIA RTX 3090 GPUs.

		We can compare the proposed $\nn$-based approach with other benchmark classification methods:
		\begin{itemize}
			\item To conduct an ablation study of \nn, we list the performance of standalone \icnn~and~\mgn. The former is a state-of-the-art architecture for learning dual variables in convex problems; see~\cite{YC-LZ-BZ-2022}. 
			\item \emph{Deep classifier} is an DNN, which takes $(\pmb{\gamma},\pmb{\xi})$ as input and produces $\tL$ scalars in the range $(0,1)$ as output, representing the probability of each of the $\tL$ constraints binding \cite{DD-SM-2019}. We employ a 4-layer DNN with same layer sizes as the $\nn$, and train it on dataset $\mcal{D}$ using binary cross-entropy loss. Training is performed for 2,000 epochs using the Adam optimizer with a batch size of 64.
			\item \emph{Ridge classifier} uses PyTorch to train a ridge regression classifier on $\mcal{D}$. It first converts the targets (indicating binding and non-binding constraints) to $\{-1, 1\}$, then fits a linear model that maps the input parameters to these targets using ridge regression.
			\item \emph{XGBoost classifier} involves training a gradient-boosted decision-tree classifier. The \texttt{XGBoost} package is used with a tree depth of 4, trained for 50 epochs at a learning rate of 0.1.
		\end{itemize}
		
		\subsection{Numerical Results}
		All classifiers' predictions are fed into Algorithm~\ref{alg:accelerate}. The comparative results are presented in Tables~\ref{tab:presolve_performance} and~\ref{tab:presolve_performance-cdf}, including the training time, confusion matrix, total solve times, and the fraction of cases requiring resolution due to the detection of violated constraints.
		
		Deep and ridge classifiers, along with \mgn, train the fastest, whereas $\nn$ takes longer due to its MoE-based framework. However, $\nn$'s training time scales well with the number of buses, unlike XGBoost, which becomes impractical for larger cases. For example, XGBoost training for \texttt{case10000} exceeds 12 hours and fails to complete.

		The proposed $\nn$-based CS method shows high true positives and low false negatives but has relatively lower true negatives and higher false positives. This outcome is acceptable because, in constraint classification, minimizing false negatives is critical to avoid constraint violations in the reduced problem, which would require re-solving and significantly increase the total runtime of Algorithm~\ref{alg:accelerate}. $\nn$ achieves very low false negatives, helping to avoid re-solves and resulting in the lowest solve times in most cases. Additionally, \moge\ achieves lower false negatives than \icnn\ and \mgn, which is beneficial for larger cases. Interestingly, \mgn\ tends to classify most constraints as binding, allowing \gate\ to switch to \mgn\ if \icnn\ makes a false positive error.
		
		The solve time advantage of \moge\ ranges from 10\% to 34\%, which is significant when C-OPF needs to be solved repeatedly or when computational resources are limited.Furthermore, \moge\ is more robust than \icnn\ in avoiding re-solves. Additionally, although the deep classifier and XGBoost offer good classification performance, they cannot match the efficiency of architectures like \moge\ and \icnn\ designed to approximate convex functions and their gradients.

		\section{Conclusion}
		\label{sec:5}
            \edit{
		In this paper, we introduced a data-driven constraint screening approach to called \moge\ to presolve convexified optimal power flow, utilizing neural network architecture that can learn convex gradients. The following analyses and experiments were carried out.
        \begin{itemize}
            \item We presented a general technique based on rank conditions to assess the suitability of a family of C-OPF models for constraint screening.
            \item We provided generalization results and a guarantee that our proposed framework leads to the exact solution as the original problem in finite time.
            \item We discussed methods for generating a dataset of C-OPF parameters, and augmenting the same.
            \item  Numerical simulations illustrate that the proposed method significantly reduces the solve times for large-scale C-OPF problems.
        \end{itemize}
        Future research directions include the following.
        \begin{itemize}
            \item Extending \moge\ to cases where the dual variables may be degenerate.
            \item Developing presolving techniques for power networks with changing topologies and discrete elements, e.g. switches.
        \end{itemize}
	} 

    \edit{
    \appendix
    \section{CDF-OPF}
    \label{app:cdf-opf}
        CDF-OPF is a convex relaxation of ACOPF for low-voltage radial networks with a tree topology. Let $(\mcal{N}_0,\mcal{E})$ be the directed graph representing the power network, where $\mcal{N}_0\ldef \{0\} \cup \mcal{N}$ is the set of buses with 0 denoting a special leaf bus (often the point of common coupling). We assume that the branches in $\mcal{E}$, denoted by $(i,j)$ are oriented \emph{away} from bus 0. Owing to the network's tree topology, each branch in $\mcal{E}$ corresponds to a unique bus in $\mcal{N}$, and as such we index a branch as $j$ rather than its full description $(i,j)$. Each bus $i\in\mcal{N}$ is associated with indices $\mcal{G}_i$ of generators attached to it, real and reactive demands $P_i^d,Q_i^d$, and minimum (maximum) voltage magnitude $\underline{V}_i$ ($\bar{V}_i$). We assume $V_0$ is fixed since it is determined by the upstream high-voltage transmission network. In addition, the real and reactive power outputs of the $\tth{k}$ generator are denoted by $P_k^g$ and $Q_k^g$ respectively. The set $\mcal{G} \ldef \cup_{i\in\mcal{N}} \mcal{G}_i$ collects all generation units. Each branch $i$ is represented by its resistance $r_i$, reactance $x_i$, and thermal limit $\bar{S}_i$. \par
        Let $V_i$ denote the squared voltage magnitude and $p_i + \mathsf{j}q_i$ the complex injection at bus $i$. Furthermore, let $P_i + \mathsf{j}Q_i$ denote the complex power flow and $\ell_i$ the squared current magnitude in the direction of the branch $i$. Lastly, we use $\pi_i$ to denote the \emph{parent} of bus $i$, i.e. $(\pi_i,i)\in\mcal{E}$, and the notation $i\rightarrow j$ to denote $(i,j)\in\mcal{E}$. In this setup, the CDF-OPF formulation is given as follows.
    \begin{subequations}
    \label{eq:cdf-opf}
        \begin{align}
            \label{eq:cdf-obj}
			&\min \, f\left( \left\{ P^g_k \right\}_{k\in\mcal{G}}, \left\{Q^g_k \right\}_{k\in\mcal{G}}, \left\{ V_i \right\}_{i\in\mcal{N}}, \left\{ \ell_i \right\}_{i\in\mcal{N}} \right),  \\ 
			&\text{subject to:} \notag \\	
            \label{eq:cdf-real-balance}
            & \sum_{k:i\rightarrow k} P_k - p_i - P_i + r_i\ell_i=0, \forall i\in\mcal{N}\\
            \label{eq:cdf-reac-balance}
            &\ \sum_{k:i\rightarrow k} Q_k - q_i - Q_i + x_i\ell_i=0, \forall i\in\mcal{N}\\
            \label{eq:cdf-volt-curr-power}
            & V_{\pi_i} - V_i - 2r_iP_i - 2x_iQ_i + (r_i^2+x_i^2)\ell_i = 0, \forall i \in \mcal{N}\\
            \label{eq:cdf-socp}
            & P_i^2 + Q_i^2 \leq V_{\pi_i}\ell_i, \forall i\in\mcal{N}\\
            \label{eq:cdf-real-inj}
            & p_i - \left( \sum_{k\in\mcal{G}_i} P^g_k \right) = -P^d_i, \forall i\in\mcal{N}\\
            \label{eq:cdf-reac-inj}
            & q_i - \left( \sum_{k\in\mcal{G}_i} Q^g_k \right) = -Q^d_i, \forall i\in\mcal{N}\\
            \label{eq:cdf-slack-bounds}
            & \underline{p}_0 \leq \sum_{k:0\rightarrow k} P_k \leq \bar{p}_0,\quad \underline{q}_0 \leq \sum_{k:0\rightarrow k} Q_k \leq \bar{q}_0\\
            \label{eq:cdf-flows}
            & P_i^2 + Q_i^2 \leq \bar{S}_i^2, \forall i \in \mcal{N}\\
            \label{eq:cdf-volt-balance}
            &\underline{V}_i^2 \leq V_i \leq \bar{V}_i^2, \forall i\in\mcal{N}\\
            \label{eq:cdf-gen-limits}
            & \underline{P}^g_k \leq P^g_k \leq \bar{P}^g_k, \quad \underline{Q}^g_k \leq Q^g_k \leq \bar{Q}^g_k, \forall k \in \mcal{G}
        \end{align}
    \end{subequations}
    \begin{table*}[tb!]
        \centering
        \caption{Representing CDF-OPF~\eqref{eq:cdf-opf} as parametric convex optimization problem~\eqref{prob:orig}, where the box constraint~\eqref{eq:var-bounds} corresponds to constraints~\eqref{eq:cdf-volt-balance} and \eqref{eq:cdf-gen-limits}.}
        \resizebox{\linewidth}{!}{
        \renewcommand{\arraystretch}{1.4}
\begin{tabular}{l p{3cm}|l p{4cm}|l p{4cm}}
	\hline
	\multicolumn{2}{c|}{\textbf{Variables}} & \multicolumn{2}{c|}{\textbf{Constraints}} & \multicolumn{2}{c}{\textbf{Parameters}}\\ \hline
	\multirow{3}{*}{$\mb{x}$:} & \multirow{3}{*}{\begin{minipage}{0.2\linewidth}
		$\{P^g_k,Q^g_k\}_{k\in\mcal{G}}$,\\$\{P_{i},Q_{i},\ell_i \}_{i\in\mcal{N}}$,\\
			$\{ p_i, q_i, v_i \}_{i\in\mcal{N}}$
	\end{minipage}} & $g(\mb{x}), \tilde{g}(\mb{x})$: & \eqref{eq:cdf-socp},~\eqref{eq:cdf-slack-bounds}--\eqref{eq:cdf-flows} &$\pmb{\gamma}$: & \multirow{2}{*}{\begin{minipage}{0.2\linewidth}
	$\underline{p}_0,\bar{p}_0,\underline{q}_0,\bar{q}_0,\{\bar{S}_{i}^2 \}_{i\in\mcal{N}}$\\{}
	\end{minipage}}\\
	&  &$h(\mb{x}), \tilde{h}(\mb{x})$: & \eqref{eq:cdf-real-balance}--\eqref{eq:cdf-volt-curr-power},~\eqref{eq:cdf-real-inj}--\eqref{eq:cdf-reac-inj}  & $\pmb{\xi}$: & $\{P^d_i,Q^d_i\}_{i\in\mcal{N}}$\\
	\hline
\end{tabular}
\renewcommand{\arraystretch}{1}
        }
        \label{tab:convert_to_param-cdf}
    \end{table*}
    In the above, the convex objective~\eqref{eq:cdf-obj} can model various objectives such as generation cost minimization, volt-VAR control, or minimization of $I^2R$ losses in the network. Constraints~\eqref{eq:cdf-real-balance}--\eqref{eq:cdf-socp} represent the convex relaxation of the DistFlow equations.~\eqref{eq:cdf-real-inj}--\eqref{eq:cdf-reac-inj} characterize per-bus injections, while~\eqref{eq:cdf-slack-bounds} bound the injections on bus 0. The thermal limits of each branch are specified in~\eqref{eq:cdf-flows}. Lastly,~\eqref{eq:cdf-volt-balance} places bounds on voltage magnitudes, while~\eqref{eq:cdf-gen-limits} limits real and reactive generation quantities. CDF-OPF can be represented as a parameterized C-OPF~\eqref{prob:orig} as shown in Table~\ref{tab:convert_to_param-cdf}.
    \begin{lemma}
    \label{lem:cdf-fullrank}
        If $V_0>0$ and the feasible set of~\eqref{eq:cdf-reac-balance}--\eqref{eq:cdf-socp} does not include $V_i=0$ for any bus $i$, Lemma~\ref{lemma:fixLICQ} holds for CDF-OPF.
    \end{lemma}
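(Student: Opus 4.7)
The plan is to verify Lemma~\ref{lemma:fixLICQ} by showing that the null space of the Jacobian of the fixed CDF-OPF constraints is trivial, exploiting the tree topology together with the hypothesis $V_i>0$ at every feasible point.

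The first step is a structural reduction. Among the fixed constraints, the auxiliary injection variables $p_i$ and $q_i$ appear only in \eqref{eq:cdf-real-balance} and \eqref{eq:cdf-reac-balance} respectively, each with coefficient $-1$. Hence the Jacobian contains two disjoint identity-like blocks on the $\{p_i\}$ and $\{q_i\}$ columns, which immediately forces any null-space coefficient on these rows to vanish. It then remains to show full row rank of the sub-Jacobian consisting of rows for \eqref{eq:cdf-volt-curr-power} (with coefficients, say, $\alpha_i$) together with the binding rows of \eqref{eq:cdf-socp} (with coefficients $\mu_i$ for $i\in\mcal{A}_{\mb{g}}(\tilde{\mb{x}})$), restricted to the columns for $\{V_i,P_i,Q_i,\ell_i\}_{i\in\mcal{N}}$.

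The second step exploits that $P_i$, $Q_i$, and $\ell_i$ each appear in only the two rows indexed by branch $i$ within this sub-Jacobian. Setting the column coefficients for these three variables to zero produces the per-branch decoupled system
\begin{align*}
-2r_i\alpha_i+2P_i\mu_i=0,\quad -2x_i\alpha_i+2Q_i\mu_i=0,\quad (r_i^2+x_i^2)\alpha_i-V_{\pi_i}\mu_i=0,
\end{align*}
with $\mu_i=0$ whenever $i\notin\mcal{A}_{\mb{g}}(\tilde{\mb{x}})$. Nonzero line impedance $r_i^2+x_i^2>0$ then instantly gives $\alpha_i=0$ for $i\notin\mcal{A}_{\mb{g}}(\tilde{\mb{x}})$. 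For $i\in\mcal{A}_{\mb{g}}(\tilde{\mb{x}})$, I would assume toward a contradiction that $\mu_i\neq 0$ and combine the three relations above with the binding SOCP identity $P_i^2+Q_i^2=V_{\pi_i}\ell_i$ to deduce $\alpha_i=\ell_i\mu_i$, $P_i=r_i\ell_i$, $Q_i=x_i\ell_i$, and $V_{\pi_i}=(r_i^2+x_i^2)\ell_i$; the hypothesis $V_{\pi_i}>0$ is used here to divide through.

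The decisive last step is substituting these four identities back into \eqref{eq:cdf-volt-curr-power}, which collapses to $V_i=0$, contradicting the standing hypothesis. Hence $\mu_i=0$ for all $i\in\mcal{A}_{\mb{g}}(\tilde{\mb{x}})$, the $P,Q,\ell$ relations then force every $\alpha_i=0$, and the remaining $V_i$-column conditions are automatically satisfied, establishing the lemma. I expect this concluding algebraic contradiction to be the main obstacle and the precise place where both parts of the hypothesis ($V_0>0$ and no $V_i=0$) become essential; everything else is routine bookkeeping of nonzero entries over the tree's edges.
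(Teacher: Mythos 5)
Your proof is correct, but it takes a genuinely different route from the paper's. The paper establishes full row rank by exhibiting an invertible square submatrix: it restricts the Jacobian to the columns $\{\mb{p},\mb{q},\mb{v},[\pmb{\ell}]_{\mcal{A}}\}$ and leans on the invertibility of the reduced branch--bus incidence matrix $\mb{A}$ of the tree (for the $\mb{v}$-block) together with $\mb{v}_\pi\neq\mb{0}$ (for the $\pmb{\ell}$-block). You instead show directly that no nontrivial linear combination of the rows vanishes: after killing the balance rows via the $p_i,q_i$ columns, you exploit the fact that $P_i,Q_i,\ell_i$ each touch only the two remaining rows of branch $i$, and you dispose of a putative nonzero multiplier $\mu_i$ on a binding SOCP row by deriving $P_i=r_i\ell_i$, $Q_i=x_i\ell_i$, $V_{\pi_i}=(r_i^2+x_i^2)\ell_i$ and back-substituting into \eqref{eq:cdf-volt-curr-power} to get $V_i=0$ --- a clean contradiction that uses the hypothesis in a different (and arguably more illuminating) way than the paper does; your approach never needs the incidence matrix at all, and it carefully accounts for the coupling between the voltage-drop rows and the binding SOCP rows in the $\ell_i$ and $V_j$ columns, which the paper's ``block-diagonal'' shortcut glosses over. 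Two small caveats: (i) your conclusion $\alpha_i=0$ from $\mu_i=0$ requires $r_i^2+x_i^2>0$ on every branch, an assumption the paper's route avoids (it recovers the $\alpha$'s from the $\mb{v}$-columns via $\mb{A}^{-1}$), though it is physically innocuous; (ii) what you are showing trivial is the \emph{left} null space of the Jacobian (linear independence of its rows), not its null space --- your bookkeeping of ``coefficients on rows'' is consistent with this, but the opening sentence should say so.
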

    \begin{proof}
    Let $N \ldef |\mcal{N}|$, and let $\tilde{\mb{A}}\in\real{N \times N+1}$ be the branch-bus incidence matrix of $(\mcal{N}_0,\mcal{E})$. We can represent $\tilde{\mb{A}}$ as $\tilde{\mb{A}} = \bm{\mb{a}_0 & \mb{A}}$ wherein $\mb{A}\in\real{N \times N}$ is the invertible reduced branch-bus indicence matrix. Subsequently, we set up a few definitions. We let $\mb{F}\ldef \mb{A}^{-1}$, $\mb{D}_r\ldef \text{diag}\left( \{ r_i\}_{i\in\mcal{N}} \right)$ and $\mb{D}_x\ldef \text{diag}\left( \{ x_i\}_{i\in\mcal{N}} \right)$, and correspondingly the matrices $\mb{R}\ldef 2\mb{F}\mb{D}_r\mb{F}^\top$ and $\mb{X}\ldef 2\mb{F}\mb{D}_x\mb{F}^\top$ are defined. In this setting, equations~\eqref{eq:cdf-real-balance}--\eqref{eq:cdf-socp} can be written in matrix-vector notation as~\cite[(7)]{dg-topologies}
    \begin{subequations}
        \begin{gather}
            \label{eq:cdf-1}
            \mb{p} - \mb{A}^\top \mb{P} - \mb{D}_r \pmb{\ell} = \mb{0}\\
            \label{eq:cdf-2}
            \mb{q} - \mb{A}^\top \mb{Q} - \mb{D}_x \pmb{\ell} = \mb{0}\\
            \label{eq:cdf-3}
            \mb{Av} + \mb{a}_0 V_0 - 2\mb{D}_r\mb{P} - 2\mb{D}_x\mb{Q} + (\mb{D}_r^2 + \mb{D}_x^2)\pmb{\ell} = \mb{0}\\
            \label{eq:cdf-4}
            \text{diag}(\mb{P})\mb{P} + \text{diag}(\mb{Q})\mb{Q} - \text{diag}(\pmb{\ell})\mb{v}_\pi \preceq \mb{0},
        \end{gather}
        wherein $\mb{p} \ldef \{p_i\}_{i\in\mcal{N}}$, $\mb{q} \ldef \{q_i\}_{i\in\mcal{N}}$, $\mb{P} \ldef \{P_i\}_{i\in\mcal{N}}$, $\mb{Q} \ldef \{Q_i\}_{i\in\mcal{N}}$, and $\mb{v}\ldef \{V_i\}_{i\in\mcal{N}}$ gather the relevant variables in a vectorized form. Furthermore $\mb{v}_\pi\ldef\bm{V_{\pi_1} & V_{\pi_2} & \cdots & V_{\pi_N}}$ collects the parent bus' voltage of each bus. It remains to prove that the Jacobian of binding constraints in~\eqref{eq:cdf-1}--\eqref{eq:cdf-4} are linearly independent with respect to any feasible $\mb{x}$ as defined in Table~\ref{tab:convert_to_param-cdf}.\par
        To that end, we show linear independence of Jacobian rows with respect to a subset of variables (columns) given as
        \begin{align*}
            \tilde{\mb{x}} \ldef \{ \mb{p}, \mb{q}, \mb{v}, [\pmb{\ell}]_{\mcal{A}_{\mb{x}}}\},
        \end{align*}
        wherein $\mcal{A}_{\mb{x}}\subseteq [N]$ is the set of binding SOCP constraints~\eqref{eq:cdf-4} for a given $\mb{x}$. Note that 
        \begin{align*}
            \frac{\partial \eqref{eq:cdf-1},\eqref{eq:cdf-2},\eqref{eq:cdf-3},\eqref{eq:cdf-4} }{\partial \tilde{\mb{x}} } = \text{blkdiag}\bm{\mb{I}_N& \mb{I}_N & \mb{A} & [\text{diag}(\mb{v}_{\pi})]_{\mcal{A}_{\mb{x}}}},
        \end{align*}
        where $[\text{diag}(\mb{v}_{\pi})]_{\mcal{A}_{\mb{x}}}$ selects the rows of $\text{diag}(\mb{v}_{\pi})$ corresponding to the indices in $\mcal{A}_{\mb{x}}$. By the invertibility of $\mb{A}$ and the nonzero nature of the vector $\mb{v}_\pi$ due to feasibility of $\mb{x}$, we conclude that the above partial jacobian has full row-rank. This concludes the proof.
    \end{subequations}
    \end{proof}
    Lemma~\ref{lem:cdf-fullrank} shows that the convexified DistFlow model, under mild conditions (i.e. the voltage collapse mode $V_i=0$ for any bus $i$ is not feasible) satisfies the fixed LICQ conditions. Subsequently, it also satisfies the strong duality conditions over $\Gamma \times \Xi$ stated in Theorem~\ref{thm:LICQ}.
    }
	
	\section{Proof of Lemma~\ref{lemma:fixLICQ}}\label{proof:fixLICQ}
	Let 
	$\mcal{A}_{\mb{g}}(\mb{x})  = \mcal{A}_\soc(\mb{x}) \cup \mcal{A}_\ang(\mb{x})$,
	where 
	$\mcal{A}_\soc(\mb{x})$ is the set of SOC binding constraints in~\eqref{eq:qc-soc}. 
	$\mcal{A}_\ang(\mb{x}) \ldef \mcal{A}^{low}_\ang(\mb{x}) \cup \mcal{A}^{up}_\ang(\mb{x})$ is the set of angle binding constraints in~\eqref{eq:qc-ang-lims}, which reflects the bindings at the PAD lower or upper limits.\footnote{Henceforth, we omit the argument $(\mb{x})$ for these sets to simplify notation.} 
	Also, Let $l(m):[E]\mapsto\mcal{E}$ be the function mapping each index in $[E]$ to a unique line in $\mcal{E}$.
	Define the partial set of variables $\mb{x}_s\ldef \left\{P_{ij}, Q_{ij}, P_{ji}, Q_{ji}, W^{\text{R}}_{ij}, W^{\text{I}}_{ij}\right\}_{(i,j)\in\mcal{E}}$. Let $\mb{J}_s$ be the submatrix of the Jacobian $\mb{J}$ formed by the columns corresponding to $\mb{x}_s$. We can compactly express it as a block upper triangular matrix of size
	$(4E + |\mcal{A}_{\mb{g}}(\mb{x})|)\times 6E$, as follows:
	\begin{align*}
		\mb{J}_s = 
		\begin{NiceMatrix}
			& \scriptstyle{P_{ij}} & \scriptstyle{Q_{ij}} & \scriptstyle{P_{ji}} & \scriptstyle{Q_{ji}} & \scriptstyle{W^{\text{R}}_{ij}} & \scriptstyle{W^{\text{I}}_{ij}}\\
			& \Block{3-4}{\mb{I}_{4E}} &&&& \Block{3-2}{\boldsymbol{\ast}} \\
			\scriptstyle{\eqref{eq:qc-flows}}     & \\
			& \\
			\scriptstyle{\eqref{eq:qc-soc}}      & \Block{2-4}{\mb{0}} &&&& \Block{2-2}<\footnotesize>{\\ \\ \underbrace{\bm{\mb{S}^{\text{R}} & \mb{S}^{\text{I}}\\\mb{A}^{\text{R}} & \mb{A}^{\text{I}}}}_{\mb{J}_g}} \\
			\scriptstyle{\eqref{eq:qc-ang-lims}}  &  & 
			\CodeAfter
			\SubMatrix({2-2}{6-7})[vlines=4,hlines=3]
		\end{NiceMatrix}
	\end{align*}
	where the four submatrices are given as:	\begin{align*}
		\mb{S}^{\text{R}} &= \left[  2W^{\text{R}}_{l(m)}\mb{e}_{m}^\top:m\in\mcal{A}_\soc \right] \\
		\mb{S}^{\text{I}} &= \left[  2W^{\text{I}}_{l(m)}\mb{e}_{m}^\top:m\in\mcal{A}_\soc  \right]\\
		\mb{A}^{\text{R}} &= 
		\left[ 
		\begin{cases}
			\tan(\underline{\theta}_{l(m)})\mb{e}_m^\top& \text{if }m\in\mcal{A}_\ang^{low}\\
			-\tan(\bar{\theta}_{l(m)})\mb{e}_m^\top& \text{if }m\in\mcal{A}_\ang^{up} 
		\end{cases}
		\right]\\
		\mb{A}^{\text{I}} &= 
		\left[ 
		\begin{cases}
			-\mb{e}_m^\top& \text{if }m\in\mcal{A}_\ang^{low} \\
			\mb{e}_m^\top& \text{if }m\in\mcal{A}_\ang^{up} 
		\end{cases}
		\right].
	\end{align*}

	\noindent Note that these submatrices are constructed row by row using scaled canonical vectors $\{\mb{e}_m^{\top}\}$, with scaling factors determined by specific conditions.
	Furthermore, we have $W^{\text{R}}_{l(m)}\neq 0, \forall m\in\mcal{A}_\soc(\mb{x})$. Otherwise, $W^{\text{I}}_{l(m)} = 0$ due to \eqref{eq:qc-ang-lims}, which implies $W_{ii} = 0$ for some $i$ (cf. binding constraint \eqref{eq:qc-soc}). This would violate the box constraint~\eqref{eq:qc-volt-lims} with nonzero $\underline{V}_i^2$.

	We will explore the special structure of $\mb{J}_g(\mb{x})$ to demonstrate its full rank for any $\mb{x} \in \mcal{X}_{\mathrm{pf}}$, thereby establishing the full rank of $\mb{J}_s$ and $\mb{J}$~\cite[Sec. 0.9.4]{RAH-CRJ:2012}. First, if $\mcal{A}_{\mb{g}} =\varnothing$, then $\mb{J}_s = \bm{\mb{I}_{4E} & \boldsymbol{\ast}}$, which is full rank.
	Otherwise, depending on the binding possibilities of~\eqref{eq:qc-soc} and~\eqref{eq:qc-ang-lims}, we have four cases: 
	i) $\mcal{A}_\soc=\varnothing, \mcal{A}_\ang \neq \varnothing$; ii)
	$\mcal{A}_\soc\neq \varnothing, \mcal{A}_\ang =\varnothing$; iii) $\mcal{A}_\soc \neq \varnothing, \mcal{A}_\ang \neq \varnothing, \mcal{A}_\soc \cap \mcal{A}_\ang = \varnothing$; and iv) $\mcal{A}_\soc \cap \mcal{A}_\ang \neq \varnothing$.
	In each of the first three cases, $\mathbf{J}_g$ is full rank as it is in \emph{row echelon form (REF) without zero rows}, possibly after row permutations.
	
	Finally, the most complex case-iv occurs when both the SoC constraint and one of the angle limit constraints are simultaneously binding for at least one line. 
	Define $\rho_m \ldef \frac{-\tan(\underline{\theta}_{l(m)})}{2W^{\text{R}}_{l(m)}},\,\forall m \in \mcal{A}_\soc \cap \mcal{A}_\ang^{low}$. For each $m$, the two corresponding rows in $\mb{J}_g$ have only four nonzero elements. Applying the elementary row operation:
	\begin{align*}
		&\begin{bNiceMatrix}
			\cdots &2W^{\text{R}}_{l(m)} &\cdots &2W^{\text{I}}_{l(m)}&\cdots\\
			\cdots &\tan(\underline{\theta}_{l(m)}) &\cdots  &-1 &\cdots
		\end{bNiceMatrix}
		\ro{r_2+\rho_m \times r_1}  \\
		&\begin{bNiceMatrix}
			\cdots  & \hspace{0.5cm} 2W^{\text{R}}_{l(m)} & \hspace{0.23cm} \cdots &2W^{\text{I}}_{l(m)} &\cdots\\
			\cdots &0 &\hspace{0.23cm} \cdots  &p &\cdots
		\end{bNiceMatrix}\,,
	\end{align*}
	we get the pivot
	$p=-(1+\tan^2(\underline{\theta}_{l(m)}))\neq 0$ for the second row. Similarly, define $\tilde{\rho}_m \ldef \frac{\tan(\bar{\theta}_{l(m)})}{2W^{\text{R}}_{l(m)}},\,\forall m \in \mcal{A}_\soc \cap \mcal{A}_\ang^{up}$, we get the pivot $\tilde{p}=1+\tan^2(\bar{\theta}_{l(m)})\neq 0$ with the same row operation.
	By repeating the aforementioned operation for all $m \in \mcal{A}_\soc \cap \mcal{A}_\ang$, we convert $\mb{J}_g$ into \emph{REF without zero rows}, confirming its full rank.
	
	\section{Proof of Theorem~\ref{thm:LICQ}}
	\label{app:proof-thm-LICQ}
	Consider the following parametric (not necessarily convex) problem, where the parameter space $\Theta$ is an open, nonempty set with non-zero measure.
	\begin{align*}
		\mcal{P}(\pmb{\theta}) \ldef \min\limits_{\mb{x}\in\real{n}}\quad& f(\mb{x})\\
		\text{s.t.}\quad& \mb{g}(\mb{x}) \mleq \mb{0},\quad \mb{h}(\mb{x})=\mb{0}\\
		\quad& \tilde{\mb{g}}(\mb{x},\pmb{\theta})\mleq \mb{0},\quad \tilde{\mb{h}}(\mb{x},\pmb{\theta}) = \mb{0}.
	\end{align*}
	\begin{thm*}[Genericity of LICQ~\cite{AH-SB-GH-FD:2018}]
		Consider the problem $\mcal{P}(\pmb{\theta})$ with $f,\mb{g},\mb{h},\tilde{\mb{g}},\tilde{\mb{h}}$ in class $C^r$ (i.e., $r$-times continuously differentiable) for all $\mb{x}\in\mcal{X}_{\mathrm{pf}}$ and 
		$\pmb{\theta}\in\Theta$ with $r>\max(0,n-M-\tilde{M})$ and
		assume LICQ holds for all $\mb{x}\in\mcal{X}_{\mathrm{pf}}$.
		If the map
		$
		\pmb{\theta} \mapsto (\tilde{\mb{g}}(\mb{x},\pmb{\theta}),\tilde{\mb{h}}(\mb{x},\pmb{\theta}))
		$
		has rank $\tilde{M}+\tilde{L}$ for every $\mb{x}\in\mcal{X}_{\mathrm{pf}}$ and $\pmb{\theta}\in\Theta$,
		then LICQ holds for almost every $\pmb{\theta}\in\Theta$ and for every feasible point of $\mcal{P}(\pmb{\theta})$.
	\end{thm*}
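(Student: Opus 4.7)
The plan is to prove the theorem via a standard stratification argument combined with the Parametric Transversality Theorem, which itself is a direct consequence of Sard's theorem. The key idea is to decompose the problem by possible active-set configurations, apply transversality on each stratum to obtain a generic conclusion, and then take a finite union of the resulting null sets.

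First, for each pair of subsets $I \subseteq [\tilde{L}]$ and $J \subseteq [L]$, define the stratification map
\[
\Phi_{I,J}(\mb{x},\pmb{\theta}) \ldef \left([\mb{g}(\mb{x})]_J,\, \mb{h}(\mb{x}),\, [\tilde{\mb{g}}(\mb{x},\pmb{\theta})]_I,\, \tilde{\mb{h}}(\mb{x},\pmb{\theta})\right),
\]
regarded as a $C^r$ map from $\real{n} \times \Theta$ into $\real{|J|+M+|I|+\tilde{M}}$. I would then verify that $\mb{0}$ is a regular value of $\Phi_{I,J}$ by inspecting its full Jacobian with respect to $(\mb{x},\pmb{\theta})$, which has the block form
\[
D\Phi_{I,J} = \begin{bmatrix} D_{\mb{x}}[\mb{g}]_J & \mb{0} \\ D_{\mb{x}}\mb{h} & \mb{0} \\ D_{\mb{x}}[\tilde{\mb{g}}]_I & D_{\pmb{\theta}}[\tilde{\mb{g}}]_I \\ D_{\mb{x}}\tilde{\mb{h}} & D_{\pmb{\theta}}\tilde{\mb{h}} \end{bmatrix}.
\]
The upper block has full row rank $|J|+M$ by the assumption that LICQ holds everywhere on $\mcal{X}_{\mathrm{pf}}$ with respect to $\mb{g}$ and $\mb{h}$. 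The lower-right block has full row rank $|I|+\tilde{M}$ because the parametric rank hypothesis gives full rank $\tilde{L}+\tilde{M}$ for $D_{\pmb{\theta}}(\tilde{\mb{g}},\tilde{\mb{h}})$, and restricting to the subset $I$ of outputs preserves row independence. Elementary row operations that cancel the lower-left block against the upper block then show $D\Phi_{I,J}$ has full row rank $|J|+M+|I|+\tilde{M}$, confirming $\mb{0}$ as a regular value.

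Next, I would invoke the Parametric Transversality Theorem: if $\mb{0}$ is a regular value of a $C^r$ map $\Phi: X \times \Theta \to Y$, then for almost every $\pmb{\theta} \in \Theta$ the slice $\Phi(\cdot,\pmb{\theta})$ also has $\mb{0}$ as a regular value, provided $r > \max(0, \dim X - \dim Y)$. Applied to $\Phi_{I,J}$, this reads $r > \max(0, n - |J| - M - |I| - \tilde{M})$, which is implied by the hypothesis $r > \max(0, n - M - \tilde{M})$ uniformly across all pairs $(I,J)$. The conclusion is that for a.e. $\pmb{\theta}$, at every $\mb{x}$ satisfying $\Phi_{I,J}(\mb{x},\pmb{\theta}) = \mb{0}$, the $\mb{x}$-partial Jacobian $D_{\mb{x}}\Phi_{I,J}$ has full row rank, which is exactly LICQ for the active-index pair $(I,J)$. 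Since $2^{[\tilde{L}]}\times 2^{[L]}$ is finite, a union of the finitely many null sets of bad parameters remains null, and for any $\pmb{\theta}$ outside this union, LICQ holds at every feasible point of $\mcal{P}(\pmb{\theta})$.

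The main obstacle is bookkeeping the regularity degree $r$ and carefully verifying the combined-rank condition for $D\Phi_{I,J}$. The rank verification benefits from the fact that the fixed constraints are $\pmb{\theta}$-independent (their rows vanish in the $\pmb{\theta}$-block), permitting a clean column-wise separation, but one must still write out the block elimination to ensure no hidden rank loss occurs. The compatibility of Sard's smoothness requirement with the stated hypothesis is the subtle input: it works because enlarging $I$ and $J$ only decreases $n - |J| - M - |I| - \tilde{M}$, so the worst case for the lower bound on $r$ is $I = J = \varnothing$, yielding precisely $r > \max(0, n - M - \tilde{M})$ as stated.
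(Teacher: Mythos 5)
The paper does not actually prove this statement: it is quoted as an external result from Hauswirth \emph{et al.}~\cite{AH-SB-GH-FD:2018}, and the surrounding appendix only verifies that QC-OPF meets its hypotheses (smoothness, full-rank parameter-to-constraint map, fixed LICQ). What you have written is a reconstruction of the proof of the imported theorem itself, via active-set stratification combined with the parametric transversality theorem; this is essentially the standard argument from the differential-topology literature and the cited reference, so your route is the canonical one rather than a new one. The skeleton is sound: the block lower-triangular Jacobian with a full-row-rank upper-left block (from fixed LICQ) and a full-row-rank lower-right block (from the parameter-rank hypothesis) does have full row rank; the Sard degree bookkeeping is correct; and the worst case $I=J=\varnothing$ does recover exactly $r>\max(0,n-M-\tilde{M})$.

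Two points need tightening before this is a complete proof. First, the regular-value claim must hold at \emph{every} point of $\Phi_{I,J}^{-1}(\mb{0})$, but as you have defined it that zero set contains points with $\mb{g}_j(\mb{x})>0$ for some $j\notin J$; such points lie outside $\mcal{X}_{\mathrm{pf}}$, so the fixed-LICQ hypothesis gives you nothing there and the upper block need not have full row rank. The standard fix is to restrict each $\Phi_{I,J}$ to the open set $\{\mb{x}:\mb{g}_j(\mb{x})<0,\ j\notin J\}\times\Theta$ (and similarly exclude the inactive components of $\tilde{\mb{g}}$), so that the zero set consists precisely of feasible points whose active sets are $(I,J)$; every feasible point of $\mcal{P}(\pmb{\theta})$ is then covered by exactly one stratum and the finite union of null parameter sets still closes the argument. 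Second, the phrase ``elementary row operations that cancel the lower-left block against the upper block'' is not quite right: the rows of $D_{\mb{x}}[\tilde{\mb{g}}]_I$ need not lie in the row space of the upper block, and no cancellation is needed. The correct (and simpler) argument is that any vanishing linear combination of the rows must have zero coefficients on the lower rows, by full row rank of the $\pmb{\theta}$-block, and then zero coefficients on the upper rows, by full row rank of the $\mb{x}$-block of the fixed constraints.
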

	To prove Theorem~\ref{thm:LICQ}, we will demonstrate that QC-OPF satisfies all conditions stated in the theorem above. That is, C1) all functions are in class $C^r$; C2) the parameter-to-constraint map is full rank; and C3) LICQ holds for all $\mb{x}\in\mcal{X}_{\mathrm{pf}}$, which is already established in Lemma~\ref{lemma:fixLICQ}.
	First, since all functions in QC-OPF are either affine or quadratic, they are infinitely differentiable, thereby satisfying condition C1). Second, let us consider embedding the box constraint $\underline{\mb{x}} \preceq \mb{x} \preceq \bar{\mb{x}}$ into $\tilde{\mb{g}}$.
	The parameter-to-constraint map of C-OPF is $(\pmb{\gamma},\pmb{\xi}) \mapsto (\tilde{\mb{g}}(\mb{x})-\pmb{\gamma},\tilde{\mb{h}}(\mb{x})-\pmb{\xi})$.
	It is evident that the mapping's Jacobian is the diagonal sign matrix ($\pm 1$ on the diagonal). Thus, the map 
	is full-rank. Finally, it is known that LICQ implies \emph{Mangasarian-Fromovitz constraint qualification (MFCQ)}, which is equivalent to Slater's condition for convex problems~\cite[pp. 45, 160]{JB-AL:2006}.
	This result ensures strong duality and hence completes the proof. 
	

	\section{Proof of Proposition~\ref{prop:two-iters}}\label{app:proof_two_iters}
    \label{app:maxloops}
	For brevity, we define the set
	\begin{align*}
		C(\pmb{\xi}) \ldef \left\{ \mb{x}\in\real{n}: \mb{g}(\mb{x})\preceq \mb{0}, \mb{h}(\mb{x})=\mb{0},\tilde{\mb{h}}(\mb{x}) = \pmb{\xi}, \underline{\mb{x}} \preceq \mb{x} \preceq \bar{\mb{x}}\right\},
	\end{align*}
	which allows us to restate~\eqref{prob:orig} as
	\begin{align}
		\label{eq:restate}
		\mcal{V}(\pmb{\gamma},\pmb{\xi}) = \min\limits_{\mb{x}\in C(\pmb{\xi})} \quad& f(\mb{x}),\quad \text{s.t.} \quad \tilde{\mb{g}}(\mb{x})\preceq \pmb{\gamma},
	\end{align}
	with its optimizer denoted by $\mb{x}^*$. Consider the following lemma.
	\begin{lemma}
		\label{claim:onecons}
		For problem~\ref{eq:restate}, suppose it is known \emph{a priori} that $\tilde{\pmb{\lambda}}^*_i > 0$ for $i\in S\subseteq [\tilde{L}]$. Then, the optimizer $\mb{x}^{\epsilon}$ of the perturbed problem $\mcal{V}(\pmb{\gamma}+\epsilon\Delta,\pmb{\xi})$, where
		\begin{align*}
			\Delta \ldef \begin{cases}
				1, & \text{if } i \in S\\
				0, & \text{otherwise}
			\end{cases}
		\end{align*}
		violates at least one constraint in $[\tilde{\mb{g}}(\mb{x})]_S \preceq [\pmb{\gamma}]_S$, for any $\epsilon>0$.
	\end{lemma}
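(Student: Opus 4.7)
The plan is to argue by contradiction, exploiting the monotonicity and convexity of the value function $\mcal{V}$ along the direction $\Delta$ together with Proposition~\ref{prop:grad}.

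First I would suppose, toward a contradiction, that $\tilde{\mb{g}}_i(\mb{x}^\epsilon)\leq \pmb{\gamma}_i$ for every $i\in S$. Combined with $\tilde{\mb{g}}_i(\mb{x}^\epsilon)\leq \pmb{\gamma}_i+\epsilon\Delta_i=\pmb{\gamma}_i$ for $i\notin S$ (which is just the feasibility of $\mb{x}^\epsilon$ in the perturbed problem), this yields $\tilde{\mb{g}}(\mb{x}^\epsilon)\preceq \pmb{\gamma}$, so $\mb{x}^\epsilon$ is feasible for the unperturbed problem and thus $\mcal{V}(\pmb{\gamma},\pmb{\xi})\leq f(\mb{x}^\epsilon)=\mcal{V}(\pmb{\gamma}+\epsilon\Delta,\pmb{\xi})$. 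Conversely, any optimizer of the unperturbed problem is feasible for the perturbed one because $\pmb{\gamma}+\epsilon\Delta\succeq \pmb{\gamma}$, giving $\mcal{V}(\pmb{\gamma}+\epsilon\Delta,\pmb{\xi})\leq \mcal{V}(\pmb{\gamma},\pmb{\xi})$. Hence $\mcal{V}(\pmb{\gamma}+\epsilon\Delta,\pmb{\xi})=\mcal{V}(\pmb{\gamma},\pmb{\xi})$.

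Next I would introduce the one-dimensional reduction $\phi(t)\ldef \mcal{V}(\pmb{\gamma}+t\Delta,\pmb{\xi})$. By Lemma~\ref{lem:conv}, $\phi$ is convex on an open interval around $[0,\epsilon]$, and the same relaxation-of-constraints argument used above shows that $\phi$ is non-increasing on $[0,\infty)$ (since $\Delta\succeq \mb{0}$, any point feasible at time $t_1$ remains feasible at time $t_2>t_1$). The equality $\phi(0)=\phi(\epsilon)$ then sandwiches $\phi$ between its two endpoints, forcing $\phi(t)=\phi(0)$ for every $t\in[0,\epsilon]$. In particular, the right-derivative satisfies $\phi'_+(0)=0$.

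The contradiction now follows from Proposition~\ref{prop:grad}: at any $(\pmb{\gamma},\pmb{\xi})$ in the full-measure subset of $\Gamma\times\Xi$ where the proposition applies, $\mcal{V}$ is differentiable in $\pmb{\gamma}$ with $\nabla_{\pmb{\gamma}}\mcal{V}(\pmb{\gamma},\pmb{\xi})=-\tilde{\pmb{\lambda}}^*$, so the directional derivative along $\Delta$ equals $\langle-\tilde{\pmb{\lambda}}^*,\Delta\rangle=-\sum_{i\in S}\tilde{\pmb{\lambda}}^*_i<0$ by hypothesis. This contradicts $\phi'_+(0)=0$, so the assumption must fail and $\tilde{\mb{g}}_i(\mb{x}^\epsilon)>\pmb{\gamma}_i$ for at least one $i\in S$, as required. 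The main obstacle I anticipate is precisely this last step: Proposition~\ref{prop:grad} only holds almost everywhere, so one must either restrict to points where differentiability is available, or replace the directional-derivative argument by a uniqueness-of-dual argument invoking Theorem~\ref{thm:LICQ}. In that alternative route, applying KKT to the perturbed problem at $\mb{x}^\epsilon$ yields a dual multiplier that vanishes on $S$ by complementary slackness ($\tilde{\mb{g}}_i(\mb{x}^\epsilon)-\pmb{\gamma}_i-\epsilon=-\epsilon<0$), whereas by uniqueness it must coincide with $\tilde{\pmb{\lambda}}^*$ (which is strictly positive on $S$), giving the same contradiction without differentiating $\mcal{V}$.
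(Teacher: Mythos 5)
Your proof is correct and follows essentially the same route as the paper's: both argue by contradiction that feasibility of $\mb{x}^\epsilon$ for the unperturbed problem is incompatible with the strict decrease of $\mcal{V}$ along $\Delta$ implied by $[\tilde{\pmb{\lambda}}^*]_S\succ\mb{0}$ and Proposition~\ref{prop:grad}. Your version is somewhat more careful than the paper's (which asserts $\mcal{V}(\pmb{\gamma},\pmb{\xi})>\mcal{V}(\pmb{\gamma}+\epsilon\Delta,\pmb{\xi})$ directly from the sign of the gradient), since you justify the strict-decrease step via the monotonicity-plus-convexity sandwich and explicitly flag the almost-everywhere caveat in Proposition~\ref{prop:grad}.
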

	\begin{proof}
		Since it is known \emph{a priori} that $[\tilde{\pmb{\lambda}}^*]_S = -[\nabla_{\pmb{\gamma}} \mcal{V}(\pmb{\gamma},\pmb{\xi})]_S \succ \mb{0}$, and from the fact that $\nabla_{\pmb{\gamma}}\mcal{V}(\pmb{\gamma},\pmb{\xi})\preceq \mb{0}$ for any feasible $(\pmb{\gamma},\pmb{\xi})$ due to Prop.~\ref{prop:grad} and dual variable properties, it follows that $\mcal{V}(\pmb{\gamma},\pmb{\xi})>\mcal{V}(\pmb{\gamma}+\epsilon\Delta,\pmb{\xi})$ for all $\epsilon > 0$. Consequently, we have $f(\mb{x}^*)>f(\mb{x}^\epsilon)$. To prove by contradiction, assume that $[\tilde{\mb{g}}(\mb{x}^\epsilon)]_S \preceq [\pmb{\gamma}]_S$, i.e. the perturbed solution $\mb{x}^\epsilon$ does not violate any constraints in~\eqref{eq:restate}. However, this implies that $\mb{x}^\epsilon$ is the optimizer to~\eqref{eq:restate} rather than $\mb{x}^*$. This leads to a contradiction, showing that the assumption is false.
	\end{proof}
	We set $S$ to be the set of false negative constraints. Lemma~\ref{claim:onecons} guarantees that at least one constraint violation will be detected in every iteration of the do-while loop and added to $\mcal{A}_{\text{bind}}$. Thus, if there are $p$ false negatives initially, the do-while loop must run no more than $p+1$ iterations.

    \edit{
    \section{Proof of Theorem~\ref{thm:moge-gen}}
    \label{app:moge-gen-proof}
    We start this proof by observing certain properties of convex functions and their gradients. Consider a monotone function $f:\real{n}\mapsto \real{n}$ such that $f = \nabla_{\mb{x}}\Phi(\mb{x})$ for some convex $\Phi:\mcal{X}\mapsto\real{}$ over $\mcal{X}$. Let the discrete set $\bar{\mcal{X}}\ldef\{\mb{x}^{(1)},\cdots,\mb{x}^{(K)}\}\subset \mcal{X}$ be such that $\mb{x}^{(i)}\in\text{dom}(\Phi)$ and $f(\mb{x}^{(i)})=\mb{g}$ for all $i\in[K]$. Let an arbitrary point $\tilde{\mb{x}}\in\text{conv}\left(\bar{\mcal{X}}\right)$ be represented as $\tilde{\mb{x}} = \sum_{i\in[K]}\pmb{\alpha}_i \mb{x}^{(i)}$, where $\pmb{\alpha}\succeq \mb{0}$ an $\mb{1}^\top \pmb{\alpha}=1$. It follows from the convexity of $\Phi$ that
    \begin{align}
    \label{eq:convx-property}
        \Phi(\tilde{\mb{x}}) \geq \Phi(\mb{x}^{(i)}) + \left(\nabla \Phi(\mb{x}^{(i)})\right)^\top \left(\tilde{\mb{x}} - \mb{x}^{(i)} \right).
    \end{align}
    Multiplying~\eqref{eq:convx-property} with $\mb{\alpha}_i$ and adding the inequalities over all $i$, we get
    \begin{align}
    \label{eq-thm2-eq1}
    \Phi \left( \sum_{i\in[K]} \pmb{\alpha}_i\mb{x}^{(i)} \right) \geq
        \sum_{i\in[K]} \pmb{\alpha}_i \Phi(\mb{x}^{(i)}).
    \end{align}
    On the other hand, Jensen's inequality provides
    \begin{align}
    \label{eq-thm2-eq2}
        \Phi \left( \sum_{i\in[K]} \pmb{\alpha}_i\mb{x}^{(i)} \right) \leq
        \sum_{i\in[K]} \pmb{\alpha}_i \Phi(\mb{x}^{(i)}).
    \end{align}
    \eqref{eq-thm2-eq1} and~\eqref{eq-thm2-eq2} imply that equality holds both for any value of $\pmb{\alpha}$ in the standard simplex. Taking the gradient of both sides of this equality with respect to $\mb{x}$, we obtain $f \left( \mb{x} \right) = \sum_{i\in[K]} \pmb{\alpha}_i f(\mb{x}^{(i)}) = \mb{g}$ for all $\mb{x}\in\text{conv}\left(\bar{\mcal{X}}\right)$.\par
    In the context of the the present Theorem, we replace $\mcal{X}$ with $\mcal{A}$ and $f$ with $\nabla \mcal{V}$, $\mcal{G}$, and $\mcal{H}$ to observe that all three are identically equal to $(-\tilde{\pmb{\lambda}}^*,-\tilde{\pmb{\mu}}^*)$ over $\text{conv}(\mcal{A})$. Correspondingly the convex combination $\mcal{Z}$ is also identically equal to $(-\tilde{\pmb{\lambda}}^*,-\tilde{\pmb{\mu}}^*)$ over $\text{conv}(\mcal{A})$.
    }

\end{document}